\documentclass[12pt]{amsart}
\usepackage{amsmath,amssymb,amsbsy,amsfonts,latexsym,amsopn,amstext,cite,
                                               amsxtra,euscript,amscd,bm}
\usepackage{url}

\usepackage{mathrsfs}

\usepackage{color}
\usepackage[colorlinks,linkcolor=blue,anchorcolor=blue,citecolor=blue,backref=page]{hyperref}
\usepackage{color}
\usepackage{graphics,epsfig}
\usepackage{graphicx}
\usepackage{float}
\usepackage{epstopdf}
\hypersetup{breaklinks=true}

\usepackage[np]{numprint}
\npdecimalsign{\ensuremath{.}}

\usepackage{bibentry}

\usepackage[english]{babel}
\usepackage{mathtools}
\usepackage{todonotes}

\usepackage[colorlinks,linkcolor=blue,anchorcolor=blue,citecolor=blue,backref=page]{hyperref}

\usepackage[norefs,nocites]{refcheck}

\DeclareMathOperator*\sign{sign}

\begin{document}

\newcommand{\bs}{\boldsymbol}
\def \a{\alpha} \def \b{\beta} \def \d{\delta} \def \e{\varepsilon} \def \g{\gamma} \def \k{\kappa} \def \l{\lambda} \def \s{\sigma} \def \t{\theta} \def \z{\zeta}

\newcommand{\mb}{\mathbb}

\newtheorem{theorem}{Theorem}
\newtheorem{lemma}[theorem]{Lemma}
\newtheorem{claim}[theorem]{Claim}
\newtheorem{cor}[theorem]{Corollary}
\newtheorem{conj}[theorem]{Conjecture}
\newtheorem{prop}[theorem]{Proposition}
\newtheorem{definition}[theorem]{Definition}
\newtheorem{question}[theorem]{Question}
\newtheorem{example}[theorem]{Example}
\newcommand{\hh}{{{\mathrm h}}}
\newtheorem{remark}[theorem]{Remark}
\newtheorem{prob}[theorem]{Problem}

\numberwithin{equation}{section}
\numberwithin{theorem}{section}
\numberwithin{table}{section}
\numberwithin{figure}{section}

\def\sssum{\mathop{\sum\!\sum\!\sum}}
\def\ssum{\mathop{\sum\ldots \sum}}
\def\iint{\mathop{\int\ldots \int}}

\newcommand{\diam}{\operatorname{diam}}
\newcommand{\chr}{\operatorname{char}}

\def\squareforqed{\hbox{\rlap{$\sqcap$}$\sqcup$}}
\def\qed{\ifmmode\squareforqed\else{\unskip\nobreak\hfil
\penalty50\hskip1em \nobreak\hfil\squareforqed
\parfillskip=0pt\finalhyphendemerits=0\endgraf}\fi}

\newfont{\teneufm}{eufm10}
\newfont{\seveneufm}{eufm7}
\newfont{\fiveeufm}{eufm5}
%
%
\newfam\eufmfam
     \textfont\eufmfam=\teneufm
\scriptfont\eufmfam=\seveneufm
     \scriptscriptfont\eufmfam=\fiveeufm
%
%
\def\frak#1{{\fam\eufmfam\relax#1}}

\newcommand{\bflambda}{{\boldsymbol{\lambda}}}
\newcommand{\bfmu}{{\boldsymbol{\mu}}}
\newcommand{\bfxi}{{\boldsymbol{\eta}}}
\newcommand{\bfrho}{{\boldsymbol{\rho}}}

\def\eps{\varepsilon}

\def\fK{\mathfrak K}
\def\fT{\mathfrak{T}}
\def\fL{\mathfrak L}
\def\fR{\mathfrak R}
\def\fQ{\mathfrak Q}

\def\fA{{\mathfrak A}}
 \def\fB{{\mathfrak B}}
\def\fC{{\mathfrak C}}
\def\fL{{\mathfrak L}}
\def\fM{{\mathfrak M}}
\def\fS{{\mathfrak  S}}
\def\fU{{\mathfrak U}}

\def\sssum{\mathop{\sum\!\sum\!\sum}}
\def\ssum{\mathop{\sum\ldots \sum}}
\def\dsum{\mathop{\quad \sum \qquad \sum}}
\def\iint{\mathop{\int\ldots \int}}
 
\def\T {\mathsf {T}}
\def\Tor{\mathsf{T}_d}
\def\Tore{\widetilde{\mathrm{T}}_{d} }

\def\sM {\mathsf {M}}
\def\sL {\mathsf {L}}
\def\sK {\mathsf {K}}
\def\sP {\mathsf {P}}

\def\ss{\mathsf {s}}

\def \balpha{\bm{\alpha}}
\def \bbeta{\bm{\beta}}
\def \bgamma{\bm{\gamma}}
\def \bdelta{\bm{\delta}}
\def \bzeta{\bm{\zeta}}
\def \blambda{\bm{\lambda}}
\def \bchi{\bm{\chi}}
\def \bphi{\bm{\varphi}}
\def \bpsi{\bm{\psi}}
\def \bxi{\bm{\xi}}
\def \bmu{\bm{\mu}}
\def \bnu{\bm{\nu}}
\def \bomega{\bm{\omega}}

\def \bell{\bm{\ell}}

\def\eqref#1{(\ref{#1})}

\def\vec#1{\mathbf{#1}}

\newcommand{\abs}[1]{\left| #1 \right|}

\def\Zq{\mathbb{Z}_q}
\def\Zqx{\mathbb{Z}_q^*}
\def\Zd{\mathbb{Z}_d}
\def\Zdx{\mathbb{Z}_d^*}
\def\Zf{\mathbb{Z}_f}
\def\Zfx{\mathbb{Z}_f^*}
\def\Zp{\mathbb{Z}_p}
\def\Zpx{\mathbb{Z}_p^*}
\def\cM{\mathcal M}
\def\cE{\mathcal E}
\def\cH{\mathcal H}

\def\le{\leqslant}
\def\leq{\leqslant}
\def\ge{\geqslant}
\def\leq{\leqslant}

\def\sfB{\mathsf {B}}
\def\sfC{\mathsf {C}}
\def\sfS{\mathsf {S}}
\def\sfI{\mathsf {I}}
\def\sfT{\mathsf {T}}
\def\L{\mathsf {L}}
\def\FF{\mathsf {F}}

\def\sE {\mathscr{E}}
\def\sS {\mathscr{S}}

\def\cA{{\mathcal A}}
\def\cB{{\mathcal B}}
\def\cC{{\mathcal C}}
\def\cD{{\mathcal D}}
\def\cE{{\mathcal E}}
\def\cF{{\mathcal F}}
\def\cG{{\mathcal G}}
\def\cH{{\mathcal H}}
\def\cI{{\mathcal I}}
\def\cJ{{\mathcal J}}
\def\cK{{\mathcal K}}
\def\cL{{\mathcal L}}
\def\cM{{\mathcal M}}
\def\cN{{\mathcal N}}
\def\cO{{\mathcal O}}
\def\cP{{\mathcal P}}
\def\cQ{{\mathcal Q}}
\def\cR{{\mathcal R}}
\def\cS{{\mathcal S}}
\def\cT{{\mathcal T}}
\def\cU{{\mathcal U}}
\def\cV{{\mathcal V}}
\def\cW{{\mathcal W}}
\def\cX{{\mathcal X}}
\def\cY{{\mathcal Y}}
\def\cZ{{\mathcal Z}}
\newcommand{\rmod}[1]{\: \mbox{mod} \: #1}

\def\cg{{\mathcal g}}

\def\vX{\mathbf X}
\def\vY{\mathbf Y}

\def\vy{\mathbf y}
\def\vr{\mathbf r}
\def\vx{\mathbf x}
\def\va{\mathbf a}
\def\vb{\mathbf b}
\def\vc{\mathbf c}
\def\vd{\mathbf d}
\def\ve{\mathbf e}
\def\vf{\mathbf f}
\def\vg{\mathbf g}
\def\vh{\mathbf h}
\def\vk{\mathbf k}
\def\vm{\mathbf m}
\def\vz{\mathbf z}
\def\vu{\mathbf u}
\def\vv{\mathbf v}

\def\e{{\mathbf{\,e}}}
\def\ep{{\mathbf{\,e}}_p}
\def\eq{{\mathbf{\,e}}_q}
\def\er{{\mathbf{\,e}}_r}
\def\es{{\mathbf{\,e}}_s}

 \def\SS{{\mathbf{S}}}

 \def\0{{\mathbf{0}}}
 
 \newcommand{\GL}{\operatorname{GL}}
\newcommand{\SL}{\operatorname{SL}}
\newcommand{\lcm}{\operatorname{lcm}}
\newcommand{\ord}{\operatorname{ord}}
\newcommand{\Tr}{\operatorname{Tr}}
\newcommand{\Span}{\operatorname{Span}}

\def\({\left(}
\def\){\right)}
\def\l|{\left|}
\def\r|{\right|}
\def\fl#1{\left\lfloor#1\right\rfloor}
\def\rf#1{\left\lceil#1\right\rceil}
\def\sumstar#1{\mathop{\sum\vphantom|^{\!\!*}\,}_{#1}}

\def\mand{\qquad \mbox{and} \qquad}

\def\tblue#1{\begin{color}{blue}{{#1}}\end{color}}




\hyphenation{re-pub-lished}

\mathsurround=1pt

\def\bfdefault{b}

\def \F{{\mathbb F}}
\def \K{{\mathbb K}}
\def \N{{\mathbb N}}
\def \Z{{\mathbb Z}}
\def \P{{\mathbb P}}
\def \Q{{\mathbb Q}}
\def \R{{\mathbb R}}
\def \C{{\mathbb C}}
\def\Fp{\F_p}
\def \fp{\Fp^*}

\def\PER{{\mathcal{PER}}}
\def\per{{\mathrm {per}\,}}
\def\imm{{\mathrm {imm}\,}}

 \def \xbar{\overline x}
 \def \Kbar{\overline K}
  \def \Fbar{\overline \F_p}
   \def \Qbar{\overline \Q}

\title{On some matrix counting problems}

 \author[A. Mohammadi] {Ali Mohammadi}
\address{School of Mathematics and Statistics, University of New South Wales, Sydney NSW 2052, Australia}
\email{ali.mohammadi.np@gmail.com}

\author[A. Ostafe] {Alina Ostafe}
\address{School of Mathematics and Statistics, University of New South Wales, Sydney NSW 2052, Australia}
\email{alina.ostafe@unsw.edu.au}

\author[I. E. Shparlinski] {Igor E. Shparlinski}
\address{School of Mathematics and Statistics, University of New South Wales, Sydney NSW 2052, Australia}
\email{igor.shparlinski@unsw.edu.au}

\begin{abstract}  
We estimate the frequency of singular matrices and of matrices of a given rank whose entries are parametrised by arbitrary polynomials over the integers and modulo a prime $p$. In particular, in the integer case,  we improve a recent bound of V.~Blomer and J.~Li (2022).
\end{abstract}

\subjclass[2020]{11C20, 15B36, 15B52}

\keywords{Integer matrices, matrices over finite fields,  counting points on varieties}

\maketitle

\tableofcontents

\section{Introduction} 

\subsection{Background and motivation}
Given  an $m \times n $ matrix 
\[
\vf=\(f_{i,j}\(X_{i,j}\)\)_{\substack{1\le i\le m \\ 1 \le j\le n}}
\] 
of univariate polynomials $f_{i,j}\(X_{i,j}\)\in\Z[X_{i,j}]$ we consider the family 
$\cM_\vf$ 
of matrices  with polynomial entries of the form 
\[
\cM_{\vf} = \left\{  \(f_{i,j}\(x_{i,j}\)\)_{1\le i,j\le n}:~ x_{i,j} \in \Z, \  1\le i\le m, \ 1 \le j\le n\right\}.
\]

Furthermore, given an integer $H$,  we consider the set $ \cM_{\vf}(H)$ of $(2H+1)^{mn}$ 
matrices from $\cM_\vf$  with  $x_{i,j} \in [-H,H]$, $1\le i\le m$, $1 \le j\le n$.

Here we are interested in counting matrices from $\cM_{\vf}(H)$ which are of a given rank $r$ and 
denote the number of such matrices by $L_{\vf,r}(H)$. 
Similarly, given a prime $p$ we also consider the number $L_{\vf, r}(H,p)$ of 
 matrices from $\cM_{\vf}(H)$ whose reduction modulo $p$ is  of a given rank $r$ 
 over the finite field $\F_p$ of $p$ elements.

In the case of square matrices, that is, for $m=n$, the questions of counting singular matrices
\begin{equation}\label{eq:Count Sing}
\begin{split}
& N_{\vf}(H) =  \# \{\vX \in \cM_{\vf}(H):~\det \vX=0\}, \\
& N_{\vf}(H, p) =  \# \{\vX \in \cM_{\vf}(H):~\det \vX \equiv  0 \pmod p\}, 
\end{split}
\end{equation} 
are of special interest. 

These   questions are partially  motivated by  recent work  of Blomer and Li~\cite{BlLi}
who have introduced  and estimated  the quantity  $L_{\vf,r}(H)$   in the  special case  
\begin{equation}
\label{eq:Mon}
f_{i,j}(X_{i,j}) = X_{i,j}^d,  \qquad 1\le i\le m, \ 1 \le j\le n,
\end{equation} 
for a fixed integer $d\ge 1$. In fact in~\cite{BlLi} the entries of  $A \in \cM_{\vf}$ belong to a
dyadic interval $x_{i,j} \in [H/2,H]$, but the method can be extended to matrices with 
$x_{i,j} \in [-H,H]$.

\subsection{Previous results}

First we recall that in the 
special case of linear polynomials  $f_{i,j}(X_{i,j})=X_{i,j}$, $1\le i\le m$, $1 \le j\le n$, 
in which case we write  $L_{m,n, r}(H)$ instead of  $L_{\vf, r}(H)$, a result of Katznelson~\cite[Theorem~1]{Katz}, 
used in a very crude form, 
implies that 
\begin{equation}
\label{eq:Katz}
L_{m,n, r}(H) = H^{nr + o(1)}. 
\end{equation}

Furthermore, in the monomial  case~\eqref{eq:Mon}, it is shown in the proof  of~\cite[Lemma~3]{BlLi}  that
\begin{equation}
\label{eq: BL-Bound}
L_{\vf,r}(H)  \le H^{mr + (n-r)(r-1)  + o(1)}
\end{equation} 
for any fixed integers $n \ge m \ge r > 0$. In fact, one can easily see that one can extend 
this to many other choices of polynomials in $\vf$, not necessary monomials as in~\eqref{eq:Mon}. 
Note that for large $n=m$ and $r$ close to $n$ the exponent in~\eqref{eq: BL-Bound} is not too 
far from the exponent in~\eqref{eq:Katz}.

The quantity  $L_{\vf, r}(H,p)$ has not been studied prior this work except for the 
special case of linear polynomials  $f_{i,j}(X_{i,j})=X_{i,j}$, $1\le i\le m$, $1 \le j\le n$, 
in which case we write  $L_{m,n, r}(H,p)$ instead of  $L_{\vf, r}(H,p)$. 
In this case, the asymptotic formula of~\cite[Theorem~9]{AhmShp}   asserts that for $r \le \min\{m,n\}$ for the number $L_{m,n,r}(H, p)$ of such matrices we have
\begin{equation}\label{eq:AhmShp}
\begin{split}
& \left|L_{m,n,r}(H, p) -\frac{1}{p^{(m-r)(n-r)}}  (2H+1)^{mn} \right|\le \\  
 &\qquad\qquad\qquad\qquad \(p^{r(m+n-r)/2} + H^{r(m+n-r)-1}p^{1/2} \)p^{o(1)}. 
\end{split}
\end{equation} 
In fact,~\cite[Theorem~9]{AhmShp} 
 gives a more precise error term with  some logarithmic factors instead of $p^{o(1)}$. 

Furthermore, El-Baz, Lee and Str{\"o}mbergsson~\cite{E-BLS} 
have given matching upper and lower bounds for  $L_{m,n.r}(H,p)$, which for  $n \ge m \ge r > 0$ and  $ 1 \le H \le p/2$  can be written as 
\begin{equation}\label{eq:E-BLS-bound}
\begin{split}
  \max\{H^{mr} & , H^{mn} p^{-(m-r)(n-r)}\} \\
  &\quad  \ll L_{m,n.r}(H,p) \ll  \max\{H^{mr} , H^{mn} p^{-(m-r)(n-r)}\} , 
\end{split}
\end{equation} 
where  the notations $U\ll V$ and $V \gg U$ are both equivalent to the statement $|U|\leq c V$, 
 for some  constant $c> 0$, which throughout this work may depend on the real positive parameters 
 $m$, $n$ and $r$ and also, where obvious, on the polynomials in $\vf$.  
 
\subsection{Description of our results} Here we use a combination of analytic and algebraic arguments to 
 study $L_{\vf,r}(H)$ and $L_{\vf, r}(H,p)$. First, we modify an argument of Blomer and Li~\cite{BlLi}
 and augment it with several new ideas to obtain a substantially stronger version of their bound~\eqref{eq: BL-Bound}, 
 see, for  example,~\eqref{eq:m = n}. 
 This new bound is readily available to be used  in the proof of~\cite[Lemma~3]{BlLi}. It however remains to see whether 
our stronger bound leads to improvements of the  main results of Blomer and Li~\cite{BlLi}. 
 
We also use a similar approach to get an upper bound on  $L_{\vf, r}(H,p)$. 
For $H \ge p^{3/4+ \varepsilon}$ with some fixed $ \varepsilon>0$, 
 using a new result on absolute irreducibility of determinantal varieties, coupled with a result of Fouvry~\cite{Fouv00}, 
 we obtain an asymptotic  formula for  $N_{\vf}(H, p)$.  We also obtain a similar result for vanishing 
 immanants, which are broad generalisations of determinants and permanents. Finally, we pose an open 
 Problem~\ref{prob: AsymForm} concerning an asymptotic formula for $L_{\vf, r}(H,p)$.

\section{Main results} 

\subsection{Results over $\Z$}
We start with the following improvement and generalisation of the bound~\eqref{eq: BL-Bound}.

Here we assume that $d\ge 3$. Note that our approach works for $d=2$ as well, when it becomes 
of the same strength as~\eqref{eq: BL-Bound}, while it still extends it to more general matrices. 

It is convenient to introduce  the parameter  $s_t$,  which  for $t = 3, \ldots, 10$
is given by  Table~\ref{tab:s_t}, while  for $t \ge 11$ we define  $s_t$ as the largest integer $s\le d$ with $s(s+1)\le t+1$.

\begin{table}[h]
	\begin{tabular}{|c|c|c|c|c|c|c|c|c|}
		\hline
		\textbf{$t$} &$3$& $4$& $5$ & $6$ &$7$ & $8$ & $9$ &  $10$ \\ \hline
		$s_t$          & $2$           & $9/4$   &$5/2$  & $11/4$  & $3$   & $3$ &   $3$ &   $3$  \\ \hline
	\end{tabular}
	\vskip 10pt
	\caption{}
\label{tab:s_t}
\end{table}

Next, we define  
\[
\Delta(d,m,n,r)  =   \max_{t =3, \ldots, r}\left\{0,\,  \(t-1\)m -n\(s_t-1\) - r\(t -s_t\)\right\},
\]
where  $s_t$ is the largest integer $s\le d$ with $s(s+1)\le t+1$.
Note that we use the convention that for $r\le 2$ the last term in $ \max_{t =3, \ldots, r}$ is omitted
(or set to zero).

\begin{theorem}\label{thm: rank poly matr fij Z 1}
Let $ n\ge m \ge r \ge 4$. Fix an $m\times n$  matrix  
 $\vf $ of non-constant polynomials $f_{i,j}(X_{i,j})\in \Z[X_{i,j}]$ of degrees $\deg f_{i,j} \ge d\ge 3$, 
  $1\le i\le m$, $1 \le j\le n$. 
 Then,  
 \[
L_{\vf, r}(H) \le H^{m+nr-r+\Delta(d,m,n,r)+o(1)}
\] 
as $H\to  \infty$. 
\end{theorem}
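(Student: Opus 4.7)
The plan is to refine the approach of Blomer and Li underpinning~\eqref{eq: BL-Bound}: fix $r$ linearly independent columns of $\vX$ (the $\binom{n}{r}$ combinatorial choices being absorbed in $o(1)$), bound them trivially by $H^{mr}$, and estimate the remaining $n-r$ columns as vectors in the column span satisfying the polynomial constraints. Blomer and Li bound each remaining column by $H^{r-1+o(1)}$; my aim is to improve this via a moment-based saving controlled by a parameter $t\in\{3,\ldots,r\}$.

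The key new ingredient I would use is a Vinogradov-type mean value bound: for integer polynomials $g_1,\ldots,g_t$ of degree $\ge d\ge 3$, uniformly in $c\in\Z$,
\[
\#\left\{(x_1,\ldots,x_t)\in\left([-H,H]\cap\Z\right)^t:\,g_1(x_1)+\cdots+g_t(x_t)=c\right\}\ll H^{t-s_t+o(1)},
\]
where $s_t$ is the threshold given by $s(s+1)\le t+1$ with $s\le d$. This is exactly the modern form of Vinogradov's mean value theorem (in the spirit of Wooley and Bourgain--Demeter--Guth) adapted to general degree-$d$ polynomial values; the condition $d\ge 3$ is precisely what allows $s_t\ge 2$ and hence a non-trivial gain beyond the trivial per-equation bound $H^{t-1}$. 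I would feed this into the Blomer--Li counting by means of a case analysis indexed by $t$: in case $t$, assume the first $t$ basis columns are in general position and exploit the moment bound on the associated linear forms in the polynomial entries, while cruder (trivial or Blomer--Li-type) estimates handle the other columns.

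A careful arithmetic rearrangement should then show that for each admissible $t$ the resulting exponent takes the form $m+nr-r+\Delta_t$, with
\[
\Delta_t=(t-1)m-n(s_t-1)-r(t-s_t),
\]
and the overall bound follows by taking the maximum over $t\in\{3,\ldots,r\}$ together with the base case $\Delta_t=0$ (corresponding to no saving, i.e.\ the Blomer--Li regime). The principal obstacle is to establish the mean value estimate \emph{uniformly in the polynomials} $f_{i,j}$, rather than only in the monomial case~\eqref{eq:Mon} treated in~\cite{BlLi}; one must rule out degenerate configurations where accidental algebraic relations among the $f_{i,j}$ or among the coefficients of the column linear combinations reduce the effective degree and invalidate the saving. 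I expect this is handled via a case analysis on the rank structure of the first $r$ columns, combined with divisor-type bounds for integer polynomial equations, so that the exceptional loci form a thin set whose contribution is absorbed in the $o(1)$ term.
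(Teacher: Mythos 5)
Your proposal correctly identifies the Vinogradov--Wooley mean value ingredient, the case analysis over a parameter $t$, and the target formula $\Delta_t=(t-1)m-n(s_t-1)-r(t-s_t)$, but the decomposition you describe cannot produce that exponent. If the $r$ pivot columns are bounded trivially by $H^{mr}$ and each remaining column is saved by $s_t-1$ (i.e.\ counted in $\ll H^{r+1-s_t}$ ways), the total is $H^{mr+(n-r)(r+1-s_t)}$. Since $t$ is dictated by the structure of the chosen pivot block rather than freely set, one must maximise over all $t$, and the small-$t$ terms win: at $t=3$ (where $s_3=2$) one recovers exactly the Blomer--Li exponent $mr+(n-r)(r-1)$ with no gain at all. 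Indeed $H^{mr+(n-r)(r+1-s_t)}$ exceeds the paper's $H^{r^2+t(m-r)+(n-r)(r+1-s_t)}$ --- which is what $m+nr-r+\Delta_t$ unpacks to --- by a factor $H^{(m-r)(r-t)}$, nonnegligible precisely when $t<r$.

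The key missing observation is a type-dependent count of the $m\times r$ pivot block itself. After fixing the top $r\times r$ minor $\vX_r$ in $\ll H^{r^2}$ ways, each row $h>r$ of the $(m-r)\times r$ block $\vY_r$ is a linear combination of the top $r$ rows with coefficients $(\rho_1(h),\ldots,\rho_r(h))$; if at most $t$ of these are nonzero, then choosing $t$ of the $r$ entries of that row pins down the combination and hence the remaining $r-t$ entries, so $\vY_r$ contributes only $\ll H^{t(m-r)}$. This makes small types cheap in the pivot-block count while large types give strong per-column savings, and the $\max_t$ of the resulting trade-off is precisely $\Delta(d,m,n,r)$. Two further issues: (i) the $t=2$ case needs a separate argument (Pila's bound gives $\fC_2\ll H^{r-1+1/d+o(1)}$), and one must check that this term is dominated, which the paper does in~\eqref{eqT2 vs T3} and which is one of the reasons for the hypothesis $r\ge 4$; (ii) your concern about degenerate algebraic relations among the $f_{i,j}$ defeating the mean value saving is unfounded --- the estimate of Lemma~\ref{lem:SepVars} is uniform in the nonzero coefficients $a_i$ via H\"older and a substitution $\alpha\mapsto\alpha a_i$, so no analysis of exceptional loci is required.
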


 To see that  Theorem~\ref{thm: rank poly matr fij Z 1}   improves the bound~\eqref{eq: BL-Bound} in a very broad
range of parameters $n\ge m>r\geq 4$
and $d \ge 3$ we observe that $s_t \ge 9/4$ for $t \ge 4$ and so we have
\begin{align*}
\(t-1\)m -n\(s_t-1\) - r\(t -s_t\) & =  n-m +(m-r)t - (n-r) s_t \\
& \le n-m +(m-r)r - 9(n-r)/4  \\
 &=  (m-r)r - m - 5n/4 +9r/4.
\end{align*}
Hence, considering the term corresponding to $t=3$ seprately,
we see that   Theorem~\ref{thm: rank poly matr fij Z 1}, used  in a very crude form,  implies
\begin{equation}\label{eq:Simple form}
\begin{split}
L_{\vf, r}(H) \le H^{m+nr-r+o(1)} &+  H^{3m +n(r-1) - 2r +o(1)} \\
& \qquad \quad + H^{mr + (n-r)(r-5/4)  + o(1)}.
\end{split}
\end{equation} 

\begin{remark}
\label{rem:VL-comparison}
In the setting of~\cite{BlLi}, proportional rows are excluded from consideration.
This means that in this scenario, the bound~\eqref{eq:t in [0,1]} can be dropped in the final
bound in the proof of Theorem~\ref{thm: rank poly matr fij Z 1}  
in Section~\ref{sec:T1 2}. Hence  for such matrices we can replace 
$\Delta(d,m,n,r)$ with just $\max_{t =3, \ldots, r}\left\{\(t-1\)m -n\(s_t-1\) - r\(t -s_t\)\right\}$. 
In particular, for $L_{\vf, r}^\sharp (H)$,  defined  fully analogously  to $L_{\vf, r}(H)$, but for 
matrices with this additional non-proprtionality restriction, instead of~\eqref{eq:Simple form} we have
\[
L_{\vf, r}^\sharp(H) \le  H^{3m +n(r-1) - 2r +o(1)} + H^{mr + (n-r)(r-5/4)  + o(1)},
\]
which is always stronger than the bound~\eqref{eq: BL-Bound}. 
\end{remark}

In the most interesting case $m=n$,  for $r \ge 4$, 
the bound in Theorem~\ref{thm: rank poly matr fij Z 1} becomes 
\begin{equation}\label{eq:m = n}
L_{\vf, r}(H)     \le H^{nr+(n-r)(r-s_r+1)+o(1)}. 
\end{equation} 
Note that here we have used that 
$$
\max_{t =3, \ldots, r}(t-s_t) = r-s_r,
$$ 
since $t-s_t$ grows monotonically, which follows from the observation $s_{t+1}-s_t \leq 1 = (t+1)-t$.
In particular, for $N_\vf(H)$, given by~\eqref{eq:Count Sing}, 
we have  
\begin{equation}\label{eq:Sing Matr}
N_\vf(H) \le  H^{n^2-s_{n-1}+o(1)}, 
\end{equation} 
while~\eqref{eq: BL-Bound} gives $N_\vf(H) \le  H^{n^2-2+o(1)}$.  Thus if $d\ge n^{1/2}$ then
we save about $n^{1/2}$ against the trivial bound.

\begin{remark}
\label{rem:Pseudocomparison}
While we compare the exponents in~\eqref{eq: BL-Bound} and~\eqref{eq:m = n} with that in~\eqref{eq:Katz}, 
we do not have any convincing argument to suggest that the rank statistics of  matrices with non-linear polynomials
 has to resemble that 
of matrices with linear polynomials. Note that in the case of equal polynomials $f_{ij}(X) = f(X)$, 
$1\le i\le m$, $1 \le j\le n$, one can easily show that 
\[
L_{\vf,r}(H)  \gg H^{nr}
\]
by first choosing $x_{i,j}$ such that $ \(f_{i,j}\(x_{i,j}\)\)_{1\le i,j\le r}$ is non-singular 
(in $(2H+1) ^{r^2} + O\(H^{r^2-1}\)\gg H^{r^2}$ ways);
choosing the remaining entries in the first $r$ rows arbitrary in  $(2H+1)^{(n-r)r}$ ways, 
and setting 
$x_{h,j} = x_{1,j}$ for all $h =r +1, \ldots, m$ and $j=1, \ldots, n$.
A similar comment also applies to the comparison
between the  bound~\eqref{eq:E-BLS-bound} and our  bound~\eqref{eq:m = n  Fp} below. 
\end{remark}

\begin{remark}
Among other ingredients, our proof of Theorem~\ref{thm: rank poly matr fij Z 1} relies on a result 
of Pila~\cite{Pila}. Under various additional assumptions on the polynomials  
$f_{i,j}$,  $1\le i\le m$, $1 \le j\le n$, one can use stronger bounds such as 
of  Browning and  Heath-Brown~\cite{BrHB1,BrHB2} or Salberger~\cite{Salb}. 
However this does not change the final result as other bounds dominate this part 
of the argument. On the other hand, for $d\ge 3$ some further improvements are possible
as described in Section~\ref{sec:improve}. 
\end{remark}

\begin{remark}
A more general, although quantitatively weaker version of  Theorem~\ref{thm: rank poly matr fij Z 1} 
concerning counting matrices with entries from some rather general convex sets (rather than polynomial images), may be obtained through the use of~\cite[Theorem~3]{BrHaRu} and~\cite[Theorem~27]{Shk} instead of our  Lemmas~\ref{lem:SepVars}, \ref{lem:Small k} and~\ref{lem:Pila}. 
\end{remark}

Now for $a \in \Z$ we denote 
\[
N_{\vf}(H;a) =  \# \{\vX \in \cM_{\vf}(H):~\det \vX=a\}.
\]
Thus $N_{\vf}(H) = N_{\vf}(H;0)$. 

\begin{cor}
\label{cor:det}
Let $ n  \ge 4$. Fix an $n\times n$  matrix  
 $\vf $ of non-constant polynomials $f_{i,j}(X_{i,j})\in \Z[X_{i,j}]$ of degrees $\deg f_{i,j} \ge d\ge 3$, 
  $1\le i, j\le n$. 
 Then,  uniformly over $a \in \Z$ we have 
 \[
N_{\vf}(H;a) \le H^{n^2-s_{n-1}+o(1)}
\] 
as $H\to  \infty$. 
\end{cor}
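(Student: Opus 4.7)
My plan is to separate the cases $a = 0$ and $a \ne 0$.

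For $a = 0$, the bound is an immediate consequence of \eqref{eq:Sing Matr} in Theorem \ref{thm: rank poly matr fij Z 1}. Indeed, $N_\vf(H; 0) = N_\vf(H)$ by definition, and every singular $n \times n$ matrix has rank at most $n - 1$, so
\[
N_\vf(H; 0) \le \sum_{r=0}^{n-1} L_{\vf, r}(H).
\]
A direct check using \eqref{eq:m = n} with $m = n$, together with the monotonicity $s_{t+1} - s_t \le 1$ noted after \eqref{eq:m = n}, shows that the exponent $nr + (n-r)(r - s_r + 1)$ is nondecreasing in $r$, so the sum is dominated by its $r = n - 1$ term, giving $N_\vf(H; 0) \le H^{n^2 - s_{n-1} + o(1)}$.

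For $a \ne 0$, the matrices counted are nonsingular (rank $n$), and Theorem \ref{thm: rank poly matr fij Z 1} does not apply directly. The plan is to revisit the proof of \eqref{eq:Sing Matr}---that is, of Theorem \ref{thm: rank poly matr fij Z 1} specialized to $m = n$, $r = n - 1$---and observe that it extends uniformly to the equation $\det \vf(\vx) = a$. Both quantities count integer tuples $\vx \in [-H, H]^{n^2}$ lying on a hypersurface of the form $\{P(\vx) = c\}$, where $P(\vx) := \det \vf(\vx)$ and $c \in \{0, a\}$. The intermediate point-counting estimates invoked in the proof of \eqref{eq:Sing Matr} (for instance, Pila's theorem \cite{Pila} and reductions to smaller submatrices) yield bounds that depend only on the degree of $P$, the degrees of the polynomials $f_{i,j}$, and the geometry of the underlying variety, but not on the specific constant term $c$. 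Consequently, the same argument applied to $P(\vx) = a$ produces the bound $N_\vf(H; a) \le H^{n^2 - s_{n-1} + o(1)}$, uniformly over $a \in \Z$.

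The main obstacle is verifying this uniformity throughout the proof of \eqref{eq:Sing Matr}. Care is needed wherever that proof exploits algebraic structure specific to the singular case, such as the existence of a nontrivial null vector of $\vX$ or identities involving vanishing cofactor vectors. If any step essentially requires such structure, one can homogenize: pass to the projective closure of $\{P = a\}$ in $\mathbb{P}^{n^2}$ by introducing a variable $u$ and the equation $\widetilde P(\vx, u) = a u^n$, where $\widetilde P$ is the degree-$n$ homogenization of $P$. The resulting projective variety has degree, dimension, and generic geometric structure independent of $a$, so that the relevant counting estimates transfer uniformly, completing the argument.
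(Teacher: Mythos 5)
Your treatment of $a = 0$ is fine and matches the paper: it is just \eqref{eq:Sing Matr}. The problem is the case $a \ne 0$, where your plan does not work and is quite different from what the paper actually does.

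You propose to ``revisit the proof of \eqref{eq:Sing Matr}'' and argue that it extends uniformly to the equation $\det \vf(\vx) = a$. But that proof (of Theorem~\ref{thm: rank poly matr fij Z 1}, specialised to $m = n$, $r = n-1$) is built entirely on the rank-$r$ structure: it fixes a non-singular $r\times r$ top-left minor $\vX_r$ and then uses that every row below it is a \emph{unique linear combination} of the top $r$ rows with coefficients determined by $\vX_r$. This is not a peripheral feature one can ``verify uniformity'' around --- it is the entire counting mechanism, and it only exists because the matrix has rank $r < n$. A matrix with $\det = a \ne 0$ has full rank $n$, so no row is a linear combination of the others, and the whole parametrisation of the remaining entries collapses. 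Your fallback (homogenise and pass to the projective closure of $\{P = a\}$) is stated at a level of generality that does not connect to any of the lemmas in the paper; in particular, the point-counting inputs (Lemmas~\ref{lem:SepVars}, \ref{lem:Small k}, \ref{lem:Pila}) are all about diagonal equations $\sum a_i f_i(x_i) = 0$, not about arbitrary degree-$n$ hypersurfaces in $n^2$ variables, and passing to projective space does not turn $\det \vf = a$ into one.

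The paper's actual argument for $a \ne 0$ is elementary and quite different: expand $\det$ along the first row, $\det \vf = \sum_{h=1}^n (-1)^{h-1} f_{1,h}(X_{1,h}) D_h$, where the cofactors $D_h$ involve only the bottom $(n-1)\times n$ block. When all $D_h \ne 0$, this is a diagonal equation in the $n$ first-row variables with nonzero coefficients, and Lemmas~\ref{lem:SepVars}/\ref{lem:Small k} give $H^{n - s_{n-1}+o(1)}$ solutions for each of the $H^{n(n-1)}$ choices of the lower block; when some $D_h = 0$, one instead invokes \eqref{eq:Sing Matr} for the corresponding $(n-1)\times(n-1)$ minor and checks that the resulting contribution $H^{n^2 - s_{n-2} - 1 + o(1)}$ is dominated because $s_{n-1} \le s_{n-2} + 1$. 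If you want to repair your proof, this cofactor decomposition is the missing idea; simply re-running the rank argument or homogenising will not give the bound.
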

 
\subsection{Results over $\F_p$}
\label{sec: Res Fp}

We first remark that using bounds of~\cite{Chang} and~\cite{KMS} instead of the bounds in Section~\ref{sec: sol box Z}  
one can derive analogues of  Theorem~\ref{thm: rank poly matr fij Z 1}  for polynomial matrices over a finite field. 
Furthermore, in some ranges of $H$ the bounds from~\cite{Chang, KMS}  can be augmented with bounds of exponential sums with polynomials based on the Vinogradov mean value theorem, see, for example,~\cite[Theorem~5]{Bourg} or, depending on the range of $H$, the classical Weil's bound, see, for example,~\cite[Chapter~6, Theorem~3]{Li}  or~\cite[Theorem~5.38]{LN}. 
For such $H$ this leads to a stronger version of the trivial inequality~\eqref{eq:Triv Jk}. 

To show the ideas and to avoid the unnecessary clutter, we only consider the case of small $H$, 
where the result takes the simplest form. We recall that $L_{\vf, r}(H,p)$  is the number of 
 matrices from $\cM_{\vf}(H)$ whose reduction modulo $p$ is  of a given rank $r$ 
 over the finite field $\F_p$ of $p$ elements.  In fact, in this case, the result is uniform 
 with respect to the polynomials in $\vf$, which can now be assumed to be defined over $\F_p$ 
 rather than over $\Z$ (as we need in Theorems~\ref{thm: sing poly matr fij Fp} 
 and~\ref{thm: imm poly matr fij Fp} below).

\begin{theorem}\label{thm: rank poly matr fij Fp}
Let $ n\ge m \ge r \ge 3$. Fix an $m\times n$  matrix  
 $\vf $ of non-constant polynomials $f_{i,j}(X_{i,j})\in \F_p[X_{i,j}]$,  of degrees $e \ge \deg f_{i,j} \ge 2$, 
   $1\le i\le m$, $1 \le j\le n$. 
 Then,  for 
\[
H \le p^{2/(e(e+1))}
\]
we have 
 \[
L_{\vf, r}(H,p) \le   H^{m+nr-r+\Gamma(m,n,r)+o(1)},
\]
where
\[
\Gamma(m,n,r) = \max\{0, \, m- (n+r)/2, \, m(r-1) - n -r(r-2)\},
\]
as $H\to  \infty$. 
\end{theorem}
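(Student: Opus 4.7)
The plan is to mirror the proof of Theorem~\ref{thm: rank poly matr fij Z 1}, replacing the integer-specific counting inputs (Pila's lemma and the lattice-point machinery from Section~\ref{sec: sol box Z}) by their finite-field analogues from \cite{Chang,KMS} combined with exponential sum estimates for single-variable polynomials modulo $p$, as already indicated in the discussion preceding the statement of the theorem.

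First I would fix the position of an $r \times r$ sub-matrix whose reduction modulo $p$ is invertible; there are only $\binom{m}{r}\binom{n}{r} = O(1)$ such positions, so after permuting rows and columns I may assume the top-left $r\times r$ block $A$ is the invertible one. Writing $\vX = \begin{pmatrix} A & B \\ C & D \end{pmatrix}$, the rank-$r$ condition becomes $D \equiv C A^{-1} B \pmod p$. Equivalently, there exist linking coefficients $\lambda_{i,k}\in\F_p$ for $r<i\le m$ and $1\le k\le r$ such that for every $j$,
\[
f_{i,j}(x_{i,j}) \equiv \sum_{k=1}^{r} \lambda_{i,k}\, f_{k,j}(x_{k,j}) \pmod p.
\]
I would decompose the count of $\vX\in\cM_\vf(H)$ by first summing over the $\F_p$-valued matrix $\Lambda=(\lambda_{i,k})$ and, for each $\Lambda$, estimating the number of admissible tuples $(x_{i,j})$ with $x_{i,j} \in [-H,H]$.

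For the inner count with $\Lambda$ fixed, the analysis reduces to bounding the number of tuples satisfying the resulting system of $n(m-r)$ congruences between polynomial images. The key analytic input is a bound for the $k$-fold additive energy of the set $\{f(x)\bmod p:|x|\le H\}$ of the shape provided by \cite{Chang} and by the Konyagin--Makarychev--Shkredov estimates of \cite{KMS}, supplemented by Weil's bound or the Vinogradov mean value theorem (as in \cite[Theorem~5]{Bourg}) applied to the relevant exponential sums. The assumption $H \le p^{2/(e(e+1))}$ is precisely the threshold beyond which the Vinogradov estimate strictly improves on the trivial moment bound $J_k \le H^{2k}$ appearing in \eqref{eq:Triv Jk}, and this improvement is what plays the role of the Pila-type saving in the integer case.

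Finally, I would run the trivial-versus-non-trivial dichotomy of the integer proof, optimising the combinatorial parameter $t$ that counts how many row-congruences are treated non-trivially versus by the trivial estimate $(2H+1)^n$. Three regimes will emerge: the regime in which the trivial bound on $\Lambda$ wins, yielding the $0$-term in $\Gamma(m,n,r)$; an intermediate regime in which only a partial Vinogradov/Weil saving applies, yielding the middle term $m-(n+r)/2$; and the regime in which the Chang--KMS bound is fully effective, yielding the term $m(r-1)-n-r(r-2)$. The main obstacle is the degenerate regime in $\Lambda$, where many $\lambda_{i,k}$ vanish or coincide so that the effective number of constraints drops: quantifying this degeneracy and showing that its contribution is dominated by the middle exponent $m-(n+r)/2$ is the delicate part, and it has no clean analogue over $\Z$ because there the linking coefficients are rationals of controlled height.
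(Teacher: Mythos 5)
Your high-level strategy is correct: the paper does mimic the proof of Theorem~\ref{thm: rank poly matr fij Z 1}, fixing a non-singular $r\times r$ block and using the row-linking coefficients, and it substitutes finite-field additive-energy estimates for the integer-case tools. But two specific claims in your sketch are off.

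First, your attribution of the three exponents in $\Gamma(m,n,r)$ to three distinct analytic tools (trivial bound, Vinogradov/Weil, Chang--KMS) is not how they actually arise. All three terms come from a \emph{single} lemma (the KMS moment bound of Lemma~\ref{lem:KMS}, packaged in Lemma~\ref{lem:Energy Fp}), applied according to the type parameter $t$ of the proof of Theorem~\ref{thm: rank poly matr fij Z 1}: the $0$-term is $t\in\{0,1\}$ with the trivial bound $\fC_t\ll H^{r}$; the middle term $m-(n+r)/2$ is $t=2$, where Lemma~\ref{lem:Energy Fp} gives $\fC_2\ll H^{r-1/2+o(1)}$ (this plays the role Pila's bound played over $\Z$); the last term $m(r-1)-n-r(r-2)$ is $t\geq 3$, where $\fC_t\ll H^{r-1+o(1)}$ and the contribution is monotone in $t$ so $t=r$ dominates. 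Neither Weil's bound nor the Vinogradov mean value theorem enters this proof at all --- they are mentioned in Section~\ref{sec: Res Fp} only as options for other ranges of $H$; the constraint $H\le p^{2/(e(e+1))}$ is simply the hypothesis of Lemma~\ref{lem:KMS}.

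Second, your closing claim that the degeneracy analysis in $\Lambda$ ``has no clean analogue over $\Z$'' is the opposite of what happens. The type-$t$ decomposition --- counting rows according to how many linking coefficients are non-zero, with the bounds $\fA\ll H^{r^2}$ and $\fB_t\ll H^{t(m-r)}$ --- is literally the same over $\Z$ and over $\F_p$; the proof states that it introduces ``the same quantities $\fA$, $\fB_t$ and $\fC_t$'' as in Section~\ref{sec:T1 2}. The only difference is which estimate replaces $\fC_t$. So the degenerate regime is not an additional obstacle peculiar to $\F_p$; it is already fully handled by the machinery you propose to import. To turn your sketch into a proof you mainly need to write down the three displayed bounds for $\fL_{0,1}$, $\fL_2$, $\fL_{\ge 3}$ and observe that the last maximises at $t=r$.
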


For $m=n$, the bound in Theorem~\ref{thm: rank poly matr fij Fp} becomes 
\begin{equation}\label{eq:m = n  Fp}
L_{\vf, r}(H,p) \ll H^{n(r+1) -r+(n-r)(r-2)+o(1)} = H^{nr+(n-r)(r-1)+o(1)}. 
\end{equation} 
As in the case of polynomial matrices over $\Z$ we note that in the corresponding range of $H$, 
for $r$ close to $n$,  the exponent in~\eqref{eq:m = n  Fp} is not too 
far from the exponent in~\eqref{eq:E-BLS-bound}, see however Remark~\ref{rem:Pseudocomparison}.

Next, for $m=n$ we  present an asymptotic formula for the number of singular matrices $N_{\vf}(H, p)$
given by~\eqref{eq:Count Sing}.  
Similarly to the proof of~\eqref{eq:AhmShp} our result is based on a result of Fouvry~\cite{Fouv00} on the distribution of 
rational points on rather general algebraic varieties over prime finite fields.  Our main result is as follows.

\begin{theorem}\label{thm: sing poly matr fij Fp}
Let $n\ge 3$. Fix an $n\times n$  matrix  
 $\vf $ of non-constant polynomials $f_{i,j}(X_{i,j})\in \Z[X_{i,j}]$, $1\le i, j\le n$. Let $p$ be a sufficiently large prime. 
 Then, for a positive integer   $H \le p/2$ we have
\[
N_{\vf}(H, p)= \frac{1}{p} (2H+1)^{n^2}+ O\(p^{(n^2-1)/2 +o(1)} + H^{n^2-2}p^{1/2+o(1)} \),
\]
as $p\to  \infty$.
\end{theorem}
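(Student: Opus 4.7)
The approach parallels the proof of \eqref{eq:AhmShp}. Let
\[
F(\vX) := \det\bigl(f_{i,j}(X_{i,j})\bigr)_{1\le i,j\le n} \in \Z[\vX], \qquad \vX=(X_{i,j})_{1\le i,j\le n},
\]
so that $N_{\vf}(H,p)$ is exactly the number of integer points $\vx\in[-H,H]^{n^2}$ with $F(\vx)\equiv 0 \pmod p$. The plan has two stages: first establish that the reduction of $F$ modulo $p$ is absolutely irreducible over $\Fbar$ for every sufficiently large prime $p$; then feed this into the equidistribution result of Fouvry~\cite{Fouv00} for rational points of absolutely irreducible varieties in integer boxes.

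For the irreducibility step I would invoke the new result on determinantal varieties alluded to in Section~1.3. Since the indeterminates $X_{i,j}$ are separated and every $f_{i,j}$ is non-constant, the natural route is to compare with the generic determinant $\det(Y_{i,j})$, which is absolutely irreducible for every $n\ge 2$, and to show that the coordinatewise substitution $Y_{i,j}\mapsto f_{i,j}(X_{i,j})$ preserves absolute irreducibility once $n\ge 3$. The hypothesis $n\ge 3$ is genuinely needed: for $n=2$ the form $f_{1,1}f_{2,2}-f_{1,2}f_{2,1}$ can split over $\Fbar$, for example when the $f_{i,j}$ are $d$-th powers and $p\equiv 1 \pmod d$.

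Once $F\bmod p$ is absolutely irreducible of degree depending only on $\vf$, Fouvry's theorem applied to the hypersurface $\{F=0\}\subset\mathbb{A}^{n^2}$ yields, for $1\le H\le p/2$, an asymptotic of the shape
\[
N_{\vf}(H,p) = \frac{\#\{\vx\in\F_p^{n^2}:~F(\vx)=0\}}{p^{n^2}}\,(2H+1)^{n^2} + O\bigl(H^{n^2-2}p^{1/2+o(1)}\bigr).
\]
Combining this with the Lang--Weil bound $\#\{F=0\} = p^{n^2-1} + O\bigl(p^{(n^2-1)/2+o(1)}\bigr)$, which crucially uses absolute irreducibility, the leading term becomes $(2H+1)^{n^2}/p$, while the Lang--Weil error contributes at most $(2H+1)^{n^2}\,p^{-(n^2+1)/2+o(1)}\ll p^{(n^2-1)/2+o(1)}$, using $H\le p/2$. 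This produces exactly the error term claimed in the theorem.

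The main obstacle is the absolute irreducibility statement: one must uniformly rule out all factorisations of $\det(f_{i,j}(X_{i,j}))$ over $\Fbar$ using only that each $f_{i,j}$ is non-constant, with careful attention to how the degrees of the $f_{i,j}$ could interact with potential factors and to excluding the anomalies that occur in small characteristic. After the irreducibility lemma is in place, the rest of the argument is a direct, essentially routine application of the quantitative geometry of numbers supplied by~\cite{Fouv00}.
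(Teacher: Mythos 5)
Your strategy is essentially the paper's: establish absolute irreducibility of $D = \det(f_{i,j}(X_{i,j}))_{1\le i,j\le n}$ over $\overline{\F}_p$ for all sufficiently large $p$ (this is Lemma~\ref{lem:detpolyirr}), then combine Fouvry's box-counting result (Lemma~\ref{lem:box}) with Lang--Weil. Two points in the execution need repair, however. First, your stated Lang--Weil bound is too strong: for an absolutely irreducible hypersurface in affine $n^2$-space the Lang--Weil theorem gives $T_D(p)=p^{n^2-1}+O(p^{n^2-3/2})$, not $O(p^{(n^2-1)/2+o(1)})$ as you wrote (the latter exponent is what one gets for a plane curve). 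With the correct exponent, the Lang--Weil error contributes $(2H+1)^{n^2}p^{-3/2}$ to the final count, and for $H\le p$ this is dominated by the error term $H^{n^2-2}p^{1/2}$, not by $p^{(n^2-1)/2}$ as your computation suggests; this is precisely the verification $H^{n^2}p^{-3/2}\le H^{n^2-2}p^{1/2}$ carried out at the end of the paper's proof.

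Second, Fouvry's result (Lemma~\ref{lem:box}) also requires that the hypersurface $\cZ(D)\subset\C^{n^2}$ is not contained in any hyperplane, and your proposal does not address this hypothesis. The paper verifies it by a short Nullstellensatz argument: if $\cZ(D)\subseteq\cZ(L)$ for a linear $L$, then since the ideals $(D)$ and $(L)$ are prime, the strong Nullstellensatz gives $(L)\subseteq(D)$, so $D$ would divide $L$, which is impossible on degree grounds. Without this check the application of Lemma~\ref{lem:box} is formally incomplete. Finally, your sketched route to the irreducibility lemma itself (substitution into the generic determinant, asserting that $Y_{i,j}\mapsto f_{i,j}(X_{i,j})$ preserves irreducibility once $n\ge 3$) is not what the paper does: the paper specialises the last $n(n-1)$ variables so that $D$ reduces to $f_{1,1}(X_{1,1})-f_{1,2}(X_{1,2})-\cdots-f_{1,n}(X_{1,n})$, applies Tverberg's theorem, and then lifts back to $D$ via a degree-bookkeeping argument. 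Since you treat the irreducibility statement as a black box, this difference does not by itself invalidate your proof of the theorem, but the substitution principle you appeal to is not an established fact and would need its own justification.
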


We remark that Theorem~\ref{thm: sing poly matr fij Fp}  is nontrivial for $H\ge p^{3/4+\varepsilon}$ for any fixed $\varepsilon > 0$ and sufficiently large prime $p$. 

Clearly, Theorems~\ref{thm: rank poly matr fij Fp} and~\ref{thm: sing poly matr fij Fp} can be used to derive analogues of Corollary~\ref{cor:det}.

Next, in the special case when $\vf$ consists  of polynomials of the same degree, we obtain a very 
broad generalisation of Theorem~\ref{thm: sing poly matr fij Fp} to   the much wider
class of matrix functions known as {\it immanants\/} which  are expressions of the form
\[ \imm_\chi \vX =
\sum_{\sigma \in \cS_n} \chi(\sigma) \prod_{i=1}^n x_{i,\sigma(i)}, 
\]
where $\vX = \(x_{i,j}\)_{1 \le i,j \le n}$ is an $n\times n$ matrix (over an arbitrary ring) and  $\chi: \cS_n \to \C$ is an
irreducible character of the symmetric group $\cS_n$. In particular, the trivial character
$\chi(\sigma) = 1$ corresponds to the {\it permanent} $\per \vX$, the alternating
character $\chi(\sigma) = \sign \sigma$ corresponds to the
{\it determinant\/} $\det \vX$.

This motivates us to   define the following extension of  $N_{\vf}(H, p)$:
\[
N_{\vf, \chi}(H, p) =  \# \{\vX \in \cM_{\vf}(H):~ \imm_\chi  \vX \equiv  0 \pmod p\}, 
\]
where $\chi$  is an arbitrary character of $\cS_n$.

\begin{theorem}\label{thm: imm poly matr fij Fp}
Let $n\ge 3$. Fix an $n\times n$  matrix  
 $\vf $ of non-constant polynomials $f_{i,j}(X_{i,j})\in \Z[X_{i,j}]$,  $1\le i,  j\le n$, of the same degree $d\ge 1$. 
 Let $p$ be a sufficiently large prime. 
 Then, for any character $\chi$ of $\cS_n$, for a positive integer   $H \le p/2$ we have
\[
N_{\vf, \chi}(H, p)= \frac{1}{p} (2H+1)^{n^2}+ O\(p^{(n^2-1)/2 +o(1)} + H^{n^2-2}p^{1/2+o(1)} \),
\]
as $p\to  \infty$.
\end{theorem}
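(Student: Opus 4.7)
My plan is to follow the proof of Theorem~\ref{thm: sing poly matr fij Fp} almost verbatim, replacing the determinant by the immanant $\imm_\chi$ throughout. I would start with the character-sum decomposition
\[
N_{\vf,\chi}(H,p) = \frac{1}{p} \sum_{\vx \in [-H,H]^{n^2} \cap \Z^{n^2}} \sum_{t=0}^{p-1} \e_p\bigl(t\cdot \imm_\chi(f_{i,j}(x_{i,j}))\bigr),
\]
isolating the $t=0$ contribution as the main term $(2H+1)^{n^2}/p$. For each $t \ne 0$, the inner sum is bounded by reducing, via the standard completion argument, to counting integer points in $[-H,H]^{n^2}$ on the hypersurfaces
\[
V_c : \imm_\chi(f_{i,j}(x_{i,j})) \equiv c \pmod p, \qquad c \in \F_p,
\]
and then invoking Fouvry's theorem~\cite{Fouv00}, exactly as in the proof of Theorem~\ref{thm: sing poly matr fij Fp}.

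The key algebraic requirement, needed to apply~\cite{Fouv00}, is that each $V_c$ is absolutely irreducible over $\overline{\F_p}$ for $p$ sufficiently large, with at most $O(1)$ exceptional values of $c \in \F_p$. This is the immanant analogue of the absolute-irreducibility statement for the determinantal variety used in Theorem~\ref{thm: sing poly matr fij Fp}. The starting point is the classical fact that the generic immanant $\imm_\chi(\vY) \in \Q[y_{i,j}]$ is an absolutely irreducible polynomial of degree $n$ in $n^2$ indeterminates, for every irreducible character $\chi$ of $\cS_n$.

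To lift absolute irreducibility from $\imm_\chi(\vY) - c$ to $G_c(\vx) := \imm_\chi(f_{i,j}(x_{i,j})) - c$, I would exploit the same-degree hypothesis $\deg f_{i,j} = d$ to control the leading form. Writing $f_{i,j}(x) = a_{i,j}\,x^d + (\text{lower order})$ with $a_{i,j} \ne 0$, the total-degree-$nd$ component of $G_c$ equals $\imm_\chi\bigl((a_{i,j} x_{i,j}^d)_{1\le i,j\le n}\bigr)$, so any nontrivial factorisation of $G_c$ would propagate to this top form. Combining this observation with the multilinearity of $\imm_\chi$ in the rows and with $n\ge 3$ (so $n!\ge 6$), a row-by-row, Bertini-type argument would rule out such factorisations for all but $O(1)$ values of $c$.

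The main obstacle is precisely this absolute-irreducibility lemma, and it is where the condition $n\ge 3$ is genuinely used. Coordinate-wise substitutions $y_{i,j}\mapsto f_{i,j}(x_{i,j})$ do not in general preserve irreducibility: the simplest example is $\det((x_{i,j}^2))$ for $n=2$, which splits as $(x_{1,1}x_{2,2} - x_{1,2}x_{2,1})(x_{1,1}x_{2,2} + x_{1,2}x_{2,1})$. The argument must therefore invoke the combinatorial richness of the $n!$ monomials of $\imm_\chi$ when $n\ge 3$. Once absolute irreducibility is in hand, Fouvry's bound delivers the asserted error term $O\bigl(p^{(n^2-1)/2+o(1)} + H^{n^2-2}p^{1/2+o(1)}\bigr)$, matching the estimate of Theorem~\ref{thm: sing poly matr fij Fp}.
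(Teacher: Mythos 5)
Your high-level architecture (absolute irreducibility $\Rightarrow$ Lang--Weil for $T_F(p)$ $\Rightarrow$ Fouvry's box-count) is the right one and matches the paper, and your observation that the same-degree hypothesis lets one pass to the homogeneous top form is exactly how the paper's Lemma~\ref{lem:lcminorsirr} begins. However there are substantive gaps.

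The central one: you reduce matters to the absolute irreducibility of $\imm_\chi\bigl((a_{i,j}X_{i,j}^d)_{1\le i,j\le n}\bigr)$ and then dispatch it with ``a row-by-row, Bertini-type argument would rule out such factorisations.'' That phrase is doing all the work in your proposal and is not a proof. This is precisely the technical content that separates Theorem~\ref{thm: imm poly matr fij Fp} from Theorem~\ref{thm: sing poly matr fij Fp}, and the paper invests an actual argument in it: Lemma~\ref{lem:lcminorsirr}, proved by induction on the number of terms via an Eisenstein-at-$R_1$ argument after homogenising, with the base case supplied by Lemma~\ref{lem:detpolyirr}. You should note that the ``classical fact'' you appeal to is about $\imm_\chi(\vY)$ with \emph{linear} entries $y_{i,j}$, and irreducibility of a polynomial is emphatically not preserved under the substitution $y_{i,j}\mapsto y_{i,j}^d$ (your own $n=2$ example shows this), so nothing of that classical fact transfers for free; a genuine argument is required and you have not given one.

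A secondary point: the character-sum/completion detour and the shifted hypersurfaces $V_c$ for $c\ne 0$ are unnecessary and slightly confused. Lemma~\ref{lem:box} is applied once, directly to $F=\imm_\chi\bigl(f_{i,j}(X_{i,j})\bigr)$, and combined with the Lang--Weil estimate $T_F(p)=p^{n^2-1}+O(p^{n^2-3/2})$; the main term and error term fall out immediately. Also, ``for all but $O(1)$ values of $c$'' makes no sense in your own framing: the top form is independent of $c$, so if it is irreducible then $\imm_\chi(\vf)-c$ is irreducible for \emph{every} $c$. Finally, you omit verification of the other hypothesis of Lemma~\ref{lem:box}, namely that $\cZ(F)$ is not contained in a hyperplane; the paper handles this with a short Nullstellensatz argument which you should reproduce.
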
 

We conclude with the following. 

\begin{prob}\label{prob: AsymForm}
Obtain analogues of the asymptotic formulas of 
Theorems~\ref{thm: sing poly matr fij Fp}  and~\ref{thm: imm poly matr fij Fp} for $L_{\vf, r}(H,p)$.
\end{prob} 

The main obstacle towards a resolution of Problem~\ref{prob: AsymForm} is the 
lack of absolute irreducibility result for the corresponding algebraic variety, 
similar to Lemma~\ref{lem:detpolyirr}, which is an interesting question in its 
own rights.

\section{Absolute irreducibility of some  polynomials}

\subsection{Preparations} 
We require the following result  of Tverberg~\cite{Tve}  (see also~\cite[Corollary~2, Section~1.7]{Sch}).

\begin{lemma}\label{lem:diagonalpolyn}
Let $\K$ be an algebraically closed field of characteristic zero, $n\ge 3$ and let $f_{i}\in \K[X_{i}]$, $i=1,\ldots,k$, be non-constant polynomials. Then  the polynomial
\[
  H(X_1, \ldots, X_k)=f_1(X_1)+\cdots+f_k(X_k)
\]
is absolutely irreducible.
\end{lemma}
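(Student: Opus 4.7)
The plan is to argue by induction on $k$, with the base case $k = 3$ as the only nontrivial step. For the inductive step I assume $k \ge 4$ and that the statement holds in $k - 1$ variables. Write $H = g + f_k(X_k)$ with $g := f_1(X_1) + \cdots + f_{k-1}(X_{k-1})$, and suppose for contradiction that $H = PQ$ with $P, Q \in \K[X_1, \ldots, X_k]$ non-constant. For any $y_0 \in \K$, the specialisation
\[
g + f_k(y_0) = P|_{X_k = y_0} \cdot Q|_{X_k = y_0}
\]
expresses a non-constant $(k-1)$-variable diagonal polynomial as a product; by the inductive hypothesis the left-hand side is irreducible, so one of the specialised factors lies in $\K$. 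Writing $P = \sum_\alpha p_\alpha(X_k) X^\alpha$ with $X^\alpha$ monomials in $X_1, \ldots, X_{k-1}$, a top-total-degree coefficient $p_\alpha \in \K[X_k]$ vanishes at only finitely many $y_0$, so $P|_{X_k = y_0}$ retains its full degree in $X_1, \ldots, X_{k-1}$ at all but finitely many $y_0$. Applying the same argument to $Q$ and using that $\K$ is infinite, one of $P, Q$---say $P$---must lie in $\K[X_k]$. Then $P(X_k)$ divides $g + f_k(X_k)$, and evaluating at any $\alpha \in \K$ with $P(\alpha) = 0$ gives $g \equiv -f_k(\alpha)$ as polynomials, contradicting the non-constancy of $g$.

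The base case $k = 3$ is the main obstacle, since the specialisation trick above breaks down: $f_1(X_1) + f_2(X_2) + c$ may be reducible for every $c \in \K$ (already $X_1^2 + X_2^2$ factors). Suppose $H = f_1(X_1) + f_2(X_2) + f_3(X_3) = PQ$ non-trivially. If $P$ or $Q$ lies in $\K[X_3]$ I conclude as in the inductive step. Otherwise, for all but finitely many $\alpha \in \K$ both $P|_{X_3 = \alpha}$ and $Q|_{X_3 = \alpha}$ are non-constant, yielding a non-trivial factorisation of $g(X_1, X_2) + f_3(\alpha)$, where $g := f_1(X_1) + f_2(X_2)$. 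Thus $g - c$ is reducible for infinitely many $c \in \K$, and Stein's theorem on reducibility in linear pencils gives $g = F(h)$ for some $F \in \K[T]$ with $\deg F \ge 2$ and some $h \in \K[X_1, X_2]$.

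To reach a contradiction I exploit the separated form of $g$. Taking partial derivatives of $F(h) = g$ gives $F'(h)\, \partial_{X_i} h = f_i'(X_i)$ for $i = 1, 2$, hence
\[
\frac{\partial_{X_1} h}{\partial_{X_2} h} = \frac{f_1'(X_1)}{f_2'(X_2)}.
\]
The resulting first-order linear PDE $f_2'(X_2)\, \partial_{X_1} h = f_1'(X_1)\, \partial_{X_2} h$ has $g$ itself as a first integral (its characteristic equation integrates to $f_1(X_1) + f_2(X_2) = \mathrm{const}$), so its general polynomial solution is $h = \phi(g)$ for some $\phi \in \K[T]$. Substituting back, $F(\phi(g)) = g$, and since $g$ assumes infinitely many values on $\K^2$ this forces the polynomial identity $(F \circ \phi)(T) = T$. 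Hence $\deg F \cdot \deg \phi = 1$, which implies $\deg F = 1$ and contradicts $\deg F \ge 2$. The chief technical ingredient is Stein's theorem; the characteristic-zero hypothesis is used there and in solving the PDE.
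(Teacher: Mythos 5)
The paper does not prove this lemma; it cites Tverberg (and Schinzel's book) and uses it as a black box. So your argument is a genuinely independent attempt, and I am comparing it against the mathematics rather than an internal proof.

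Your inductive step (reducing $k \ge 4$ to $k-1$ via specialisation of $X_k$) is correct and clean. For $k=3$ the route via Stein's theorem on reducible fibres is legitimate, if somewhat heavy machinery: from ``$g-c$ reducible for infinitely many $c$'' you correctly extract $g = F(h)$ with $\deg F \ge 2$.

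The gap is in the claim that the ``general polynomial solution'' of $f_2'(X_2)\,\partial_{X_1}h = f_1'(X_1)\,\partial_{X_2}h$ is $h=\phi(g)$. The method of characteristics is an analytic device for smooth solutions over $\R$ or $\C$; it does not by itself prove that every \emph{polynomial} solution over an arbitrary algebraically closed field of characteristic zero lies in $\K[g]$. The claim is in fact true, but establishing it algebraically requires something like the theorem that the kernel of a nonzero $\K$-derivation of $\K[X_1,X_2]$ in characteristic zero is $\K$ or $\K[p]$ for a single polynomial $p$, followed by an argument that $\K[p]=\K[g]$ --- none of which is supplied. As written this step is a hole.

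Fortunately the whole PDE detour can be bypassed. From $F(h)=g$ with $g=f_1(X_1)+f_2(X_2)$, differentiating gives
\[
F'(h)\,\partial_{X_1}h = f_1'(X_1), \qquad F'(h)\,\partial_{X_2}h = f_2'(X_2),
\]
so $F'(h)$ divides both $f_1'(X_1)$ and $f_2'(X_2)$ in $\K[X_1,X_2]$. A common divisor of a nonzero polynomial in $X_1$ alone and a nonzero polynomial in $X_2$ alone must be a constant (comparing degrees in $X_2$ forces it into $\K[X_1]$, and symmetrically into $\K[X_2]$). Hence $F'(h)\in\K$. Since $h$ is non-constant, it is transcendental over $\K$, so $F'$ is a constant polynomial, giving $\deg F\le 1$ and contradicting $\deg F\ge 2$. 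Substituting this two-line computation for the characteristics argument makes your base case rigorous, and the rest of your proof then stands.
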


\begin{remark}
\label{rem:abs irred}
If $\chr \K=p>0$, it is known by the work of Schinzel~\cite[Corollary 3, Section 1.7]{Sch} that the polynomial
\[
  H(X_1, \ldots, X_k)=f_1(X_1)+\cdots+f_k(X_k)
\] 
is absolutely irreducible if and only if at least one polynomial $f_i(X_i)$ is not of the form $h_i(X_i)^p+c h_i(X_i)$, for some $c\in\K$ and some  $h_i\in\K[X_i]$. This condition is indeed needed as the following example shows: for any $c\in\K$ let $f_i(X_i)=X_i^p+cX_i$, $i=1,\ldots,k$, then obviously the polynomial
 \[H(X_1,\ldots,X_k)=(X_1+\ldots+X_k)^p+c(X_1+\ldots+X_k)\] 
 is reducible over $\F_p$.
\end{remark}

For us it will be sufficient to have a result as in Lemma~\ref{lem:diagonalpolyn} when $\chr\K=p>0$ is a sufficiently large prime $p$ and the polynomials are defined over $\Z$, and thus we need  Ostrowski's theorem (see~\cite[Corollary~2B]{Schmidt}), which we state below.

\begin{lemma}\label{lem:Ostrowski}
Let $f(X_1, \ldots, X_k)\in \Z[X_1, \ldots, X_k]$ be an absolutely irreducible polynomial of degree $d$ and let $p$ denote a prime with
\[
p> (4\|f\|)^{M^{2^M}},
\]
where $\|f\|$ denotes the sum of the absolute values of the coefficients of $f$ and $M=\binom{k+d-1}{k}$. Then the reduction of $f$ modulo $p$ is absolutely irreducible over $\F_p$.
\end{lemma}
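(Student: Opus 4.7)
Since Lemma~\ref{lem:Ostrowski} is the classical theorem of Ostrowski as quoted from~\cite{Schmidt}, the paper will simply invoke the reference. To sketch how I would prove it, I would proceed along the standard lines of effective elimination theory.

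The strategy is to convert absolute reducibility of $f \bmod p$ into the solubility over $\Fbar$ of an explicit polynomial system whose emptiness over $\Qbar$ is already certified by a bounded Nullstellensatz identity. For each bidegree $(i,d-i)$ with $1 \leq i \leq d-1$, I would introduce indeterminates $\vec U, \vec V$ parametrising the coefficients of hypothetical factors $g$ of total degree $i$ and $h$ of total degree $d-i$. Expanding the identity $gh = f$ and matching coefficients of monomials yields a system $F_1,\ldots,F_s \in \Z[\vec U,\vec V]$ of polynomial equations whose integer coefficients are bounded in absolute value by $\|f\|$ and which involves $O(M)$ indeterminates, where $M = \binom{k+d-1}{k}$.

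Since $f$ is absolutely irreducible over $\Q$, each such system has no solution in $\Qbar^{O(M)}$, so by Hilbert's Nullstellensatz there is an identity
\[
A_1 F_1 + \cdots + A_s F_s = N_i
\]
in $\Z[\vec U,\vec V]$ for some positive integer $N_i$ and some $A_j \in \Z[\vec U,\vec V]$. Letting $N$ be the product of the $N_i$ over the finitely many bidegrees, for any prime $p > N$ these identities persist modulo $p$, so no choice of bidegree can produce a nontrivial factorisation of $f \bmod p$ over $\Fbar$. Hence $f \bmod p$ is absolutely irreducible.

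The main obstacle is obtaining the explicit tower $N \leq (4\|f\|)^{M^{2^M}}$. This requires an effective Nullstellensatz with simultaneous control of the degrees \emph{and} the heights of the cofactors $A_j$, going back to the classical elimination bounds of Hermann and refined quantitatively by Masser--W\"ustholz and Koll\'ar; the double-exponential shape reflects the degree blow-up inherent in iterated resultants. The qualitative statement that only finitely many primes are excluded is a routine consequence of Noetherianity, but the quantitative version used in~\cite{Schmidt} demands careful bookkeeping through a chain of elimination steps, which is where all the technical work lies.
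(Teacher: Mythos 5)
The paper does not prove this lemma at all: it is stated as a quotation of Ostrowski's theorem in Schmidt's form, with the citation \cite[Corollary~2B]{Schmidt}, and there is no \texttt{proof} environment attached. You correctly identify this, so your sketch is strictly supplementary.

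As a sketch, the overall strategy (reduce absolute reducibility mod $p$ to solubility of a coefficient-matching system, and certify emptiness over $\Qbar$ by an integral Nullstellensatz identity that survives reduction mod $p > N$) is a legitimate route and roughly the same circle of ideas as effective elimination. However, there is a genuine gap as written: the system you build from $gh = f$ with $g$ of degree $\le i$ and $h$ of degree $\le d-i$ \emph{always} has solutions over $\Qbar$, namely the trivial ones $g = c$, $h = f/c$ with $c$ a nonzero constant. Hence $1$ does not lie in the ideal generated by the $F_j$, there is no identity $\sum A_j F_j = N_i$ with $N_i$ a nonzero integer, and the whole Nullstellensatz step collapses. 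To repair it you must first normalise so that each candidate factor is forced to be non-constant of the prescribed degree --- for example by homogenising $f$ so that a degree split $(i,d-i)$ with $1\le i\le d-1$ is automatically nontrivial and then dehomogenising over a finite cover of affine charts, or by adding a Rabinowitsch variable inverting a chosen degree-$i$ coefficient of $g$ for each choice of that coefficient. Only after this exclusion does absolute irreducibility over $\Q$ translate into emptiness of the variety and hence into an integer Nullstellensatz certificate. It is also worth noting that Schmidt's actual argument for Corollary~2B does not invoke an effective Nullstellensatz in the Masser--W\"ustholz/Koll\'ar sense: it works through Noether's reducibility forms and explicit resultant/determinant estimates (following E.~Noether's classical construction), and it is that determinant bookkeeping, not an iterated-resultant Nullstellensatz, that yields the specific tower $(4\|f\|)^{M^{2^M}}$.
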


Therefore, we have the following direct consequence of Lemma~\ref{lem:diagonalpolyn} and Lemma~\ref{lem:Ostrowski}.

\begin{cor}
\label{cor:abs irred p}
Let $k\ge 3$ and  let $f_{i}\in \Z[X_{i}]$, $i=1,\ldots,k$,  be non-constant polynomials. Then, for any sufficiently large prime $p$,  the polynomial
\[
  H(X_1, \ldots, X_k)=f_1(X_1)+\cdots+f_k(X_k)
\]
is absolutely irreducible over $\F_p$.
\end{cor}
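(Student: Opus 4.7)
The plan is to combine the two preceding results in the most direct way possible: use Lemma~\ref{lem:diagonalpolyn} to secure absolute irreducibility in characteristic zero, then transfer this to large characteristic via Lemma~\ref{lem:Ostrowski}.

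First I would apply Lemma~\ref{lem:diagonalpolyn} with $\K = \overline{\Q}$. Since $\overline{\Q}$ is algebraically closed of characteristic zero, $k\ge 3$, and the $f_i\in\Z[X_i]\subseteq \overline{\Q}[X_i]$ are non-constant by hypothesis, the lemma yields that
\[
H(X_1,\ldots,X_k)=f_1(X_1)+\cdots+f_k(X_k)
\]
is absolutely irreducible, viewed as a polynomial with integer (hence rational) coefficients.

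Next I would invoke Lemma~\ref{lem:Ostrowski}. Set $d=\max_{1\le i\le k}\deg f_i$, which is the total degree of $H$, and let $M=\binom{k+d-1}{k}$. The quantities $\|H\|$ (the sum of absolute values of the coefficients of $H$) and $M$ depend only on the fixed data $f_1,\ldots,f_k$, hence
\[
p_0 := (4\|H\|)^{M^{2^M}}
\]
is a finite constant depending only on $f_1,\ldots,f_k$. For every prime $p>p_0$, Lemma~\ref{lem:Ostrowski} guarantees that the reduction of $H$ modulo $p$ is absolutely irreducible over $\F_p$, which is exactly the desired conclusion.

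There is essentially no obstacle here beyond checking the hypotheses of the two cited lemmas: the argument is a one-line composition. The only mild point worth flagging (in line with Remark~\ref{rem:abs irred}) is that the result would fail if one tried to state it for \emph{all} primes $p$, since in characteristic $p$ polynomials of the form $h(X)^p+ch(X)$ can produce reducible sums; the role of "sufficiently large $p$" in the statement is precisely to avoid such small-characteristic pathologies by ensuring that $p$ exceeds the Ostrowski threshold $p_0$ attached to the particular polynomials at hand.
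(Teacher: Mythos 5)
Your proof is correct and follows exactly the route the paper intends: the corollary is stated there as a direct consequence of Lemma~\ref{lem:diagonalpolyn} (absolute irreducibility in characteristic zero) combined with Lemma~\ref{lem:Ostrowski} (Ostrowski's transfer to sufficiently large primes), and your write-up simply makes the composition explicit. The verification of hypotheses and the identification of the threshold $p_0$ are both sound, so nothing is missing.
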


\subsection{Absolute irreducibility of  determinant varieties}

We believe that the following result is of independent interest and 
is our main tool in establishing Theorem~\ref{thm: sing poly matr fij Fp}. 

\begin{lemma}\label{lem:detpolyirr} 
Let $n\ge 3$ and let  $f_{i,j}\in \Z[X_{i,j}]$, $i,j=1,\ldots,n$, be non-constant polynomials.
Then the determinant 
    $\det\(f_{i,j}(X_{i,j})\)_{1\leq i,j\leq n}$, viewed as an element of $\Z[X_{1,1}, \ldots, X_{n,n}]$, is absolutely irreducible over $\Q$ and over $\F_p$ for any sufficiently large prime $p$.
\end{lemma}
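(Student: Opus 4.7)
The plan is to establish absolute irreducibility of $D := \det\bigl(f_{i,j}(X_{i,j})\bigr)_{1\le i,j \le n}$ over $\Q$ directly from Tverberg's theorem (Lemma~\ref{lem:diagonalpolyn}), and then transfer to $\F_p$ for large $p$ by Ostrowski's theorem (Lemma~\ref{lem:Ostrowski}). The key observation is that Laplace expansion along row $1$,
\[
D = \sum_{j=1}^{n}(-1)^{1+j}\,f_{1,j}(X_{1,j})\,M_{1,j},
\]
puts $D$ in the ``separated variables'' form handled by Lemma~\ref{lem:diagonalpolyn}, since each cofactor $M_{1,j}$ involves only variables $X_{i,k}$ with $i\ge 2$. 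Viewing $D$ as a polynomial in $X_{1,1},\ldots,X_{1,n}$ over the algebraically closed field $K=\overline{\Q(X_{i,j}:\,i\ge 2)}$, it takes the shape $\sum_{j=1}^{n} g_j(X_{1,j})$ with $g_j(X_{1,j})=(-1)^{1+j} f_{1,j}(X_{1,j})\,M_{1,j}$. Each $M_{1,j}$ is nonzero as a polynomial because the $(n-1)!$ monomials in its determinant expansion have pairwise distinct variable supports and hence cannot cancel; combined with $f_{1,j}$ being non-constant, this makes every $g_j$ non-constant in $X_{1,j}$ over $K$. Since $n \ge 3$, Lemma~\ref{lem:diagonalpolyn} yields that $D$ is irreducible in $K[X_{1,1},\ldots,X_{1,n}]$.

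Now suppose $D=PQ$ is any factorisation in $\overline{\Q}[X_{1,1},\ldots,X_{n,n}]$. Promoting it to the larger ring $K[X_{1,1},\ldots,X_{1,n}]$, the irreducibility above forces one factor, say $P$, to be a unit, whence $P\in K\cap \overline{\Q}[X_{1,1},\ldots,X_{n,n}]=\overline{\Q}[X_{i,j}:\,i\ge 2]$; that is, $P$ is free of the row $1$ variables. Repeating the argument row by row and, by expanding along columns, column by column, we obtain that for every row $i$ at least one of $P,Q$ is free of all variables of row $i$, and likewise for every column $j$. Since $\deg_{X_{i,j}} D=\deg f_{i,j}>0$, it is in fact \emph{exactly one} in each case, giving partitions $R_P\sqcup R_Q=\{1,\ldots,n\}$ and $C_P\sqcup C_Q=\{1,\ldots,n\}$ such that $P$ involves only variables indexed in $R_Q\times C_Q$ while $Q$ involves only variables indexed in $R_P\times C_P$. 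The same positive-degree fact forces every variable $X_{i,j}$ to appear in at least one of $P,Q$, so $(R_P\times C_P)\cup(R_Q\times C_Q)=\{1,\ldots,n\}^2$. As the four rectangles $R_P\times C_P$, $R_P\times C_Q$, $R_Q\times C_P$, $R_Q\times C_Q$ partition the square, both $R_P\times C_Q$ and $R_Q\times C_P$ must be empty, which forces one of $R_P,R_Q,C_P,C_Q$ to equal $\{1,\ldots,n\}$ and hence one of $P,Q$ to be a constant---contradicting the non-triviality of the factorisation. Thus $D$ is absolutely irreducible over $\Q$.

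Finally, since $D\in\Z[X_{1,1},\ldots,X_{n,n}]$, Lemma~\ref{lem:Ostrowski} transfers absolute irreducibility from $\Q$ to $\F_p$ for every sufficiently large prime $p$; for such $p$ the leading coefficients of the $f_{i,j}$ remain nonzero modulo $p$, so the reduction of $D$ coincides with $\det\bigl(\bar f_{i,j}(X_{i,j})\bigr)$. The delicate step I anticipate is the combinatorial bookkeeping in the second paragraph: ruling out ``disjoint rectangular block'' factorisations requires combining the Tverberg conclusion applied simultaneously to every row and every column with the positive-degree information $\deg_{X_{i,j}} D > 0$, in order to force a constant factor.
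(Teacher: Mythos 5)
Your proof is correct, and while it relies on the same two main ingredients as the paper (Tverberg's Lemma~\ref{lem:diagonalpolyn} and the Ostrowski transfer, Corollary~\ref{cor:abs irred p}), the bridging argument is genuinely different. The paper makes a single algebraic \emph{specialisation}: it sets the entries of rows $2,\ldots,n$ to complex values so the lower block becomes $[\,\mathbf{1}\mid I_{n-1}\,]$, applies Tverberg once to the resulting diagonal form $D_*=f_{1,1}(X_{1,1})-\sum_{j\ge 2}f_{1,j}(X_{1,j})$, and then converts the resulting degree information into ``$g$ is constant'' by a bootstrap using the structural observation that no monomial of the determinant can be divisible by two variables from the same row or column. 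You instead make a \emph{scalar extension}: for each row $i$ you view $D$ over $K_i=\overline{\Q(X_{k,l}:\,k\ne i)}$, where Laplace expansion along row $i$ already puts $D$ in the required diagonal shape $\sum_j g_j(X_{i,j})$, so Tverberg over $K_i$ forces one factor of $D$ to omit all of row $i$'s variables, and symmetrically for columns. The ensuing rectangle argument ($P$ supported on $R_Q\times C_Q$, $Q$ on $R_P\times C_P$, with the ``cross'' blocks $R_P\times C_Q$ and $R_Q\times C_P$ forced empty because every $X_{i,j}$ has positive degree in $D$) then closes the case without the paper's combinatorial observation~(i). Your version buys symmetry --- all rows and columns are treated identically, with no WLOG normalisation of a distinguished row --- at the mild cost of passing to an algebraically closed function field, while the paper stays over $\C$ but needs the extra monomial-support observation. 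One small point worth tightening: your justification that $M_{1,j}\ne 0$ speaks of ``$(n-1)!$ monomials with pairwise distinct variable supports,'' but the summands in the permutation expansion are products of (generally multi-term) polynomials rather than monomials; the right statement is that their \emph{top-degree} monomials have pairwise distinct supports (or, more simply, specialise the entries so the minor becomes a permutation matrix, giving determinant $\pm 1$), which yields $M_{1,j}\ne 0$ all the same.
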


\begin{proof} 
Let us denote
\[D\(\(X_{i,j}\)_{1\le i,j\le n}\) = \det\(f_{i,j}(X_{i,j})\)_{1\leq i,j\leq n}
\] 
and 
\[
d_{i,j}=\deg  f_{i,j}, \qquad i,j=1,\ldots,n.
\] 

We prove first that $D$ is irreducible over $\C$ 
and then we apply Corollary~\ref{cor:abs irred p} to conclude the absolute irreducibility modulo any sufficiently large prime $p$.

Assume now that $D = fg$ for some $f,g\in \C[X_{1,1},\ldots X_{n,n}]$. We fix a specialisation
\[(\alpha_{i,j}, ~ i=2,\ldots,n, \ j=1,\ldots,n)\in\C^{n(n-1)}
\] of the last $n(n-1)$ indeterminates $X_{2,1},\ldots, X_{n,n}$ such that we obtain
\begin{align*}
 D&(X_{1,1},\ldots, X_{1,n}, \alpha_{2,1}, \ldots, \alpha_{n,n}) \\
 &\qquad\qquad\qquad\qquad\quad= \begin{vmatrix}
    f_{1,1}(X_{1,1}) & f_{1,2}(X_{1,2}) &  \ldots  & f_{1,n}(X_{1,n}) \\
    1 & &   &  \\
    \vdots &  &  I_{n-1} &  \\
    1 &  &   & 
\end{vmatrix}.
\end{align*}
We write $D_*, f_*, g_*$ for the resulting specialised $n$-variable polynomials. To compute $D_*$, let $M_j$ denote the $(n-1)\times(n-1)$ matrix resulting from removing the first row and $j$-th column of the matrix above, so that
\begin{equation}\label{eqn:D*1}
    D_*(X_{1,1},\ldots, X_{1,n}) = \sum_{j=1}^n (-1)^{j+1}\cdot \det M_j \cdot f_{1,j}(X_{1,j}).
\end{equation}
Clearly, $\det M_1=1$. To compute $\det M_j$, for $2\leq j\leq n$, write $K_j$ for the matrix resulting from replacing the $j$-th column of $I_{n-1}$ by $[1,1, \ldots, 1]^t$ and note that $\det K_j = \det I_{n-1}=1$. 

Furthermore, for $2\leq j\leq n$, one gets $K_j$ by swapping columns of $M_j$, $j-2$ consecutive times and so $\det M_j= (-1)^{j-2}\det K_j = (-1)^{j-2}$. Hence, going back to~\eqref{eqn:D*1}, we have
\begin{align*}
D_*(X_{1,1},\ldots, &X_{1,n})\\
& = f_{1,1}(X_{1,1}) + \sum_{j=2}^n (-1)^{2j-1}\cdot f_{1,j}(X_{1,j}) \\
&= f_{1,1}(X_{1,1}) - f_{1,2}(X_{1,2}) - f_{1,3}(X_{1,3})-  \ldots - f_{1,n}(X_{1,n}).
\end{align*}
By Lemma~\ref{lem:diagonalpolyn}, $D_*$ is an absolutely irreducible polynomial, which, together with the assumption $D_* = f_* g_*$ implies 
\begin{align*}
{d_{1,j}}& \leq \max\{\deg_{X_{1,j}} f_* , \deg_{X_{1,j}}g_*\} \\
& \leq \max\{\deg_{X_{1,j}} f, \deg_{X_{1,j}} g\}\leq {d_{1,j}}
\end{align*} for all $1\leq j\leq n$.  That is 
\begin{equation}\label{eqn:degx11f}
    \max\{\deg_{X_{1,j}} f, \deg_{X_{1,j}} g\} = {d_{1,j}}, \qquad  1\leq j\leq n.
    \end{equation}

    Next, we use~\eqref{eqn:degx11f} to show that $D$ is absolutely irreducible. In particular, we use the following two basic observations:
    \begin{enumerate}
    \item[(i)]  
         If $h$ is a monomial appearing in $D$, such that $X_{i,j} \mid h$ for some $1\leq i,j\leq n$, then $X_{i,k}$, $X_{k, j} \nmid h$ for $1\leq k\leq n$. 

 Indeed, one can see this by using the determinant formula
     \[
     D=\sum_{\sigma\in \mathsf{S}_n} 
     (-1)^{\pi(\sigma )} f_{1,\sigma(1)}(X_{1,\sigma(1)})\cdots f_{n,\sigma(n)}(X_{n,\sigma(n)}),
     \] 
     where the sum is over all permutations $\sigma$ of the set $\{1,\ldots,n\}$ and
     $\pi(\sigma)$ is the parity of $\sigma$. 
  
    \item [(ii)]   
  We have   $\deg_{X_{i,j}} f + \deg_{X_{i,j}} g =d_{i,j}$ for $1\leq i,j \leq n$.
\end{enumerate}

By~\eqref{eqn:degx11f}, suppose without loss of generality  that $\deg_{X_{1,1}} f = d_{1,1}$, which by~(ii)
implies $\deg_{X_{1,1}} g = 0$. Then, the indeterminates $X_{1,j}$ do not appear in $g$ for any $2\leq j\leq n$ as otherwise this would contradict~(i).
To see this, writing $f=AX_{1,1}^{d_{1,1}} + B$ for some polynomials $A, B$, with $\deg_{X_{1,1}} B<d_{1,1}$, we conclude that the coefficient of $X_{1,1}^{d_{1,1}}$, in $fg$, is precisely $Ag$. Now, if $\deg_{X_{1,j}} g>0$, 
we have $\deg_{X_{1,j}} Ag>0$, and thus $X_{1,1}$ and $X_{1,j}$ would divide a same monomial in $D$, contradicting~(i).

Finally, suppose $g$ involves some indeterminate $X_{i,j}$. Then since $\deg_{X_{1,j}} g=0$ for all $1\leq j\leq n$, as above, writing $f=AX_{1,j}^{d_{1,j}} + B$, for some polynomials $A, B$, with $\deg_{X_{1,j}} B<d_{1,j}$, we conclude that the coefficient of $X_{1,j}^{d_{1,j}}$, in $fg$, is precisely $Ag$. This shows again that $X_{1,j}$ and $X_{i,j}$ divide a same monomial in $D$, contradicting~(i). Since this applies for any variable $X_{i,j}$, we obtain that $g$ is constant (and hence $g=1$), which concludes the absolute irreducibility over $\Q$.

Applying now Corollary~\ref{cor:abs irred p}, we conclude the proof.
\end{proof}

\begin{remark}
We note that Lemma~\ref{lem:detpolyirr} holds over any algebraically closed field $\K$ and without any condition on the characteristic $p$ if we impose some extra condition on $f_{i,j}$ for some $i,j=1,\ldots,n$, as noted in Remark~\ref{rem:abs irred}. More precisely, one has the following statement for which the proof follows exactly the same, applying Remark~\ref{rem:abs irred} instead of Corollary~\ref{cor:abs irred p}:

Let $\K$ be an algebraically closed field, $n\ge 3$ and $f_{i,j}\in \K[X_{i,j}]$, $i,j=1,\ldots,n$, non-constant polynomials. If $\chr \K=p>0$, assume also that for some $i,j=1,\ldots,n$, the polynomial $f_{i,j}$ is not of the form $h_{i,j}^p+ch_{i,j}$ for some $c\in\K^*$ and $h_{i,j}\in\K[X_{i,j}]$.

Then the determinant 
    $\det\(f_{i,j}(X_{i,j})\)_{1\leq i,j\leq n}$, viewed as an element of $\K[X_{1,1}, \ldots, X_{n,n}]$, is absolutely irreducible.
\end{remark}

\begin{remark}
We note that Lemma~\ref{lem:detpolyirr} does not necessarily hold for $n=2$, since for example for $f_{i,j}=X_{i,j}^2$, $i,j=1,2$, we have
\begin{align*}
\det\(X_{i,j}^2\)_{1\leq i,j\leq 2} &= X_{1,1}^2X_{2,2}^2 - X_{1,2}^2X_{2,1}^2 \\ &= (X_{1,1}X_{2,2} - X_{1,2}X_{2,1})(X_{1,1}X_{2,2} + X_{1,2}X_{2,1}).
\end{align*}
\end{remark}

\begin{remark}
\label{rem:rank}
It is certainly interesting to obtain a version of Lemma~\ref{lem:detpolyirr} for the variety of matrices 
from $\cM_{\vf}$ of a given rank $r  \le n$. For matrices with linear entries, that is, when $f_{i,j}(X_{i,j})=X_{i,j}$, $i,j=1,\ldots,n$,  such results are known. Indeed, this follows from the observation that, the set of such matrices is the epimorphic image of the irreducible variety $\GL_n\times \GL_n$, under the regular mapping $(A, B) \mapsto AMB^{-1}$, for any fixed $n\times n$ matrix $M$ of rank $r$. See, for example,~\cite{Abh} 
or~\cite[Proposition~1.1]{BruVet}. 
\end{remark}

In the case of polynomials of the same degree we have the following broad 
generalisation of Lemma~\ref{lem:detpolyirr} on  absolute irreducibility of arbitrary linear combinations of minors.

\begin{lemma}\label{lem:lcminorsirr} 
Let $d\geq 1$, $n,r\ge 3$, with $r\leq n$ and let 
\[
s= \binom{n}{r}^2.
\] 
Given non-constant polynomials $f_{i,j}\in \Z[X]$, $i,j=1, \ldots, n$ of the same degree, 
 write $D_h$, $1\leq h\leq s$, for the $r\times r$ minors  of the matrix $\vf = \(f_{i,j}(X_{i,j})\)_{1\leq i,j \leq n}$.
Then any non-trivial linear combination 
\begin{equation}\label{eqn:linComb}
    \sum_{1\leq  h \leq s} c_h D_h,\qquad \(c_1, \ldots, c_s\)\in \K^s \setminus \{(0, \ldots, 0)\}, 
\end{equation}
is absolutely irreducible over the field $K$ where 
\[
\K = \Q \qquad \text{or} \qquad \K= \F_p
\] 
for any sufficiently large prime $p$.
\end{lemma}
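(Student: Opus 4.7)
My plan is to adapt the specialisation strategy from the proof of Lemma~\ref{lem:detpolyirr}; the extra difficulty is that a hypothetical factor of $P=\sum_h c_hD_h$ must now be tracked across several specialisations rather than just one. Two preliminary facts will be used throughout. First, distinct minors of a generic matrix involve distinct variable sets and are therefore linearly independent over any field, so the coefficient of $X_{i,j}^d$ in $P$ equals the leading coefficient $a_{i,j}$ of $f_{i,j}$ times a non-trivial linear combination of $(r-1)\times(r-1)$ minors; in particular $\deg_{X_{i,j}}P=d$ whenever $X_{i,j}$ appears in $P$ (the analogue of property~(ii) in the proof of Lemma~\ref{lem:detpolyirr}). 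Second, every monomial of $P$ is of transversal type, so no two of its variables lie in a common row or column, exactly as in property~(i) of Lemma~\ref{lem:detpolyirr}.

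For the specialisation step, I fix any row index $i^*$ lying in the row set $I_0$ of some non-zero $c_{I_0,J_0}$. Setting $X_{i,k}=\alpha_{i,k}$ for all $i\neq i^*$ and keeping $X_{i^*,1},\dots,X_{i^*,n}$ free, Laplace expansion along row $i^*$ yields
\[
P^{*,i^*}=\sum_{j=1}^{n}\beta_j^{(i^*)}(\alpha)\,f_{i^*,j}(X_{i^*,j})+C^{(i^*)}(\alpha),
\]
where each $\beta_j^{(i^*)}(\alpha)$ is, up to sign, a linear combination of $(r-1)\times(r-1)$ minors of $(f_{i,k}(\alpha_{i,k}))_{i\neq i^*,k}$. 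By the linear independence above, $\beta_j^{(i^*)}$ is non-zero as a polynomial in $\alpha$ precisely when $X_{i^*,j}$ appears in $P$, and a Zariski-generic $\alpha\in\K^{(n-1)n}$ (with $p$ taken sufficiently large when $\K=\F_p$) makes all such $\beta_j^{(i^*)}$ simultaneously non-zero. Since the column set $J_0$ supplies at least $r\ge 3$ such indices $j$, Lemma~\ref{lem:diagonalpolyn} yields absolute irreducibility of $P^{*,i^*}$ over $\C$, and the same conclusion holds over $\F_p$ for $p>d$, since then no $\beta_j^{(i^*)}f_{i^*,j}$ can be of the Schinzel form $h^p+ch$ (cf.\ Remark~\ref{rem:abs irred}).

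Now suppose $P=fg$ with $f,g$ both non-constant and let $R$ denote the set of rows occurring in $P$. For each $i^*\in R$ the absolute irreducibility of $P^{*,i^*}$ forces exactly one of $f^{*,i^*},g^{*,i^*}$ to be constant, and for generic $\alpha$ this is equivalent to the corresponding factor being free of every $X_{i^*,j}$. This produces a disjoint partition $R=R_f\sqcup R_g$, where $R_f$ is the set of rows that $f$ does not use. Replaying the coefficient-of-$X_{i^*,j}^d$ argument from the end of the proof of Lemma~\ref{lem:detpolyirr} inside each specialisation upgrades the conclusion: writing $S_{i^*}:=\{j:X_{i^*,j}\in P\}$ and $S^{R_g}:=\bigcup_{i^*\in R_g}S_{i^*}$, the factor $g$ contains no $X_{i,j}$ whenever $j\in S^{R_g}$, and symmetrically $f$ contains no $X_{i,j}$ whenever $j\in S^{R_f}$.

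The final contradiction is combinatorial. For any non-zero minor $D_{I,J}$, if $I$ meets both $R_f$ and $R_g$, then picking $i\in I\cap R_f$ forces every $X_{i,j}$ with $j\in J$ to lie in $g$ (as $f$ does not use row $i$), so $J$ is disjoint from $S^{R_g}$, whereas picking $i^*\in I\cap R_g$ forces $J\subseteq S^{R_g}$, contradicting $|J|=r\ge 1$. Otherwise every non-zero $(I,J)$ has $I\subseteq R_f$ or $I\subseteq R_g$, and $P$ splits as $P_f+P_g$ with $P_f$ a polynomial only in the variables of $g$ and $P_g$ a polynomial only in those of $f$; matching this against the expansion $fg=f_0g_0+f_0g_1+f_1g_0+f_1g_1$ (constant and non-constant parts), the mixed piece $f_1g_1$ must vanish in the UFD $\K[X_{i,j}]$, forcing $f$ or $g$ to be a constant. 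Either branch contradicts the non-constancy of $f$ and $g$, so $P$ is absolutely irreducible over $\K$. The main obstacle I anticipate is precisely this bookkeeping of the WLOG labelling of $f$ and $g$ across the several specialisations, which is exactly what the row-partition analysis resolves.
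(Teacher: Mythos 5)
Your proof is correct, and it takes a genuinely different route from the paper's. The paper first reduces to the pure monomial case $f_{i,j}=X_{i,j}^d$ (via the observation that the top-degree part of the linear combination is the corresponding combination of minors of $(X_{i,j}^d)$), then argues by induction on the number of nonzero coefficients $c_h$: it picks a row or column separating the support into $\cS_1\sqcup\cS_2$, specialises $X_{i_0,j}=1$, homogenises, and applies Eisenstein's criterion with the (inductively irreducible) summand $R_1$ as the prime, finally closing out with the two observations (i)--(ii) from Lemma~\ref{lem:detpolyirr}. Your argument instead dispenses with both the reduction to monomials and the induction: you run a single-row specialisation $X_{i,k}\mapsto\alpha_{i,k}$ (for $i\ne i^*$) for each row $i^*$ occurring in $P$, reduce each to a diagonal polynomial in $\geq r\geq 3$ separated variables handled by Lemma~\ref{lem:diagonalpolyn}, and then pin down a hypothetical factorisation $P=fg$ by a global row-partition $R=R_f\sqcup R_g$ and a clean combinatorial contradiction on the column support. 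Besides being non-inductive, this has two minor structural advantages: it never invokes Lemma~\ref{lem:detpolyirr} (which in fact falls out as the $t=1$ special case of your argument), and it does not really use the same-degree hypothesis beyond notation (replace $X_{i,j}^d$ by $X_{i,j}^{d_{i,j}}$ everywhere), though you should have said so explicitly if you intended the stronger statement. One small slip worth flagging: when $\K=\F_p$ you write $\alpha\in\K^{(n-1)n}$, but since absolute irreducibility is a statement over $\overline{\K}$ you should take the generic specialisation point in $\overline{\F_p}^{(n-1)n}$ (which is infinite, so genericity causes no trouble); also, the cleaner route for the $\F_p$ case is Corollary~\ref{cor:abs irred p}, exactly as in the paper, rather than appealing directly to Schinzel's criterion --- your reading of Remark~\ref{rem:abs irred} is fine for $p>d$, but stating the threshold precisely is unnecessary given the "sufficiently large $p$" in the conclusion.
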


\begin{proof}
Let  $\deg f_{i,j} = d\geq 1$, $1 \le i,j \le n$.
We begin by noting that since linear combinations of corresponding minors of the matrix $\(X_{i,j}^d\)_{1\leq i,j \leq n}$ appear as the homogeneous part of top degree of~\eqref{eqn:linComb}, their absolute irreducibility imply that of~\eqref{eqn:linComb}. Thus it suffices to consider only the case $f_{i,j}= X_{i,j}^d$.

The proof is by induction on the number of nonzero terms appearing in~\eqref{eqn:linComb}, denoted by $t$, noting that the case $t=1$ has been settled by Lemma~\ref{lem:detpolyirr} (here is why we need $p$ to be 
sufficiently large).
Renumbering, we can assume that $c_1, \ldots, c_t \ne 0$ and 
write $\cS = \{  D_\nu:~1\leq \nu \leq t\}$ for the minors appearing in~\eqref{eqn:linComb}. Let $t\geq 2$ and suppose the desired result holds for $t-1$. 

Note that one may choose a row or column of the matrix $\cM$ giving a non-trivial partition $\cS = \cS_1 \sqcup \cS_2$, according to whether a given minor belonging to $\cS$ takes entries from that row/column or not. This follows from the fact that two distinct minors may not match in all rows and columns.

Write
\[
R = \sum_{1\leq h \leq t} c_h D_h  =R_1  + R_2,
\]
such that $R_\nu$ corresponds to the sum of terms appearing in $\cS_\nu$ for $\nu=1,2$.

Suppose, without loss of generality, that the minors appearing in $R_1$ and $R_2$ differ in the row $i_0$ and set $X_{i_0, j} = 1$, for $1\leq j\leq n$. Write
\[
R^* = R_1 + R_2^*
\]
for this specialisation and note that $R_1$ and $R_2^*$ are homogeneous polynomials of degrees $rd$ and $(r-1)d$, respectively.

Let 
\[
H(X_{1,1}, \ldots, X_{n,n}, Z) = Z^{nd}\cdot R\(\frac{X_{1,1}}{Z}, \ldots, \frac{X_{n,n}}{Z}\).
\] 
That is, $H$ represents the homogenised form of $R$. Note that 
\[
H = R_1 + R_2^* \cdot Z^d.
\]

By the induction hypothesis, $R_1$ is irreducible over the algebraic closure $\overline \K$ of $\K$ (for sufficiently large $p$, if $\K=\F_p)$, and given that $\deg(R_1)>\deg(R_2^*)$, clearly we have $R_1 \nmid R_2^*$. Thus irreducibility of $H$ follows by an application of Eisenstein's criterion. In turn, this implies the irreducibility of $R^*$, as a polynomial is irreducible over  $\overline \K$ if and only if its homogenised form is irreducible (see~\cite[Exercise~9, p.~392]{CoxLitShe}).

Now, suppose that $R = fg$ for some $f,g\in   \overline \K [X_{1,1},\ldots X_{n,n}]$ and write $f^*$ and $g^*$ for the polynomials resulting from setting $X_{i_0, j} = 1$, for $1\leq j\leq n$. Thus $R^* = f^*g^*$. 

Similarly to the arguments of Lemma~\ref{lem:detpolyirr}, irreducibility of $R^*$, implies
\begin{align*}
{d} \leq \max\{\deg_{X_{i,j}} f^* , \deg_{X_{i,j}}g^*\} 
& \leq \max\{\deg_{X_{i,j}} f, \deg_{X_{i,j}} g\}\leq {d},\\   1\leq i,j\leq n, & \quad i\not=i_0.
\end{align*} 
That is,
\begin{equation}\label{eqn:degx11f2}
 \max\{\deg_{X_{i,j}} f, \deg_{X_{i,j}} g\} = d \qquad  1\leq i,j\leq n, \quad i\not=i_0.
 \end{equation}

 Furthermore, clearly both observations~(i) and~(ii), in the proof, of Lemma~\ref{lem:detpolyirr} hold for the polynomial $R$. Thus the remainder of the proof is essentially a repetition of the arguments of Lemma~\ref{lem:detpolyirr} and is streamlined.

 By~\eqref{eqn:degx11f2}, without loss of generality, let $\deg_{X_{1,1}} f = d$. Then, by (ii) in the proof of Lemma~\ref{lem:detpolyirr},
we have $\deg_{X_{1,1}} g = 0$ and thus by observation (i) in the proof of Lemma~\ref{lem:detpolyirr}, $g$ cannot involve the indeterminates $X_{1,j}$ for any $2\leq j\leq n$.

Furthermore if $g$ involves some indeterminate $X_{i,j}$, since $\deg_{X_{1,j}} g=0$ for all $1\leq j\leq n$, this again contradicts (i) in the proof of Lemma~\ref{lem:detpolyirr}. Consequently, we obtain that $g$ is constant (and hence $g=1$), concluding the absolute irreducibility over $ \overline \K$.
\end{proof}

\section{Point counting on some hypersurfaces}

\subsection{Solutions to polynomial   equations in a box}  
\label{sec: sol box Z}

For a polynomial  $f(X)\in \Z[X]$   and a real $\kappa> 0$ 
we denote 
\[
I_{\kappa}(f,H) =  \int_0^1 \left| \sum_{x=-H}^H \e\(\alpha f(x)\) \right|^{\kappa} \, d\alpha , 
\]
where $\e(z) = \exp(2\pi i z)$. 

We now recall the following result of Wooley~\cite[Corollary~14.2]{Wool}.

\begin{lemma}\label{lem:Wool}
Let $f(X)\in \Z[X]$ be a fixed polynomial of   degree $d\ge 1$. Then for each integer $s$ with $1 \le s \le d$, we
have 
\[
I_{s(s+1)}(f,H) \le H^{s^2 + o(1)},  \qquad  H \to \infty. 
\]
\end{lemma}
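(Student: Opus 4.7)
The plan is to combine a combinatorial interpretation of $I_{s(s+1)}(f,H)$ as a solution count with the resolution of the Vinogradov mean value conjecture, in the polynomial-phase form encoded by Wooley's nested efficient congruencing (equivalently, the Bourgain--Demeter--Guth $\ell^2$-decoupling for the moment curve of degree $d$).

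Since $s(s+1)$ is always even, I set $k = s(s+1)/2$. By orthogonality,
\[
I_{s(s+1)}(f,H) = \# \Bigl\{(\vx, \vy) \in [-H,H]^{2k}: \sum_{i=1}^k f(x_i) = \sum_{i=1}^k f(y_i)\Bigr\}.
\]
Expanding $f(X) = a_d X^d + \cdots + a_1 X + a_0$, this becomes the single Diophantine equation $\sum_{j=1}^d a_j S_j = 0$ with $S_j = \sum_i (x_i^j - y_i^j)$. Every solution of the full Vinogradov system $S_1 = \cdots = S_s = 0$ is automatically counted, yielding the lower bound $I_{s(s+1)}(f,H) \ge J_{k,s}(2H+1)$; the substance of the lemma is the matching upper bound.

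For that upper bound, I would apply Wooley's efficient congruencing machinery directly to the one-dimensional exponential sum $S(\alpha) = \sum_{x=-H}^H e(\alpha f(x))$, viewed as living on the polynomial curve $\alpha \mapsto (\alpha a_1, \alpha a_2, \ldots, \alpha a_d)$ inside the degree-$d$ Vinogradov system. At the critical exponent $2k = s(s+1)$ with $s \le d$, the nested efficient congruencing argument --- or equivalently $\ell^2$-decoupling along the moment curve truncated at order $s$ --- produces the sharp bound $I_{s(s+1)}(f,H) \ll H^{s^2 + o(1)}$. The hard part is precisely that the $1$-parameter integral $\int_0^1 |S(\alpha)|^{2k}\, d\alpha$ cannot be dominated by the full $d$-parameter Vinogradov integral $\int_{[0,1]^d} |T(\bbeta)|^{2k}\, d\bbeta$ by mere positivity --- positivity would in fact go the wrong way. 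It is this gap that Wooley's framework is engineered to close by operating directly with single-parameter polynomial phases of fixed degree, so the cleanest route is to invoke his Corollary~14.2 as a black box rather than attempting an independent reduction to the higher-dimensional Vinogradov mean value.
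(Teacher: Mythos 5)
The paper's ``proof'' of this lemma is simply the citation to Wooley~\cite[Corollary~14.2]{Wool}, and your proposal ultimately does exactly the same, so the two approaches coincide. The incidental remark about a lower bound $I_{s(s+1)}(f,H)\ge J_{k,s}(2H+1)$ is not correct for $s<d$ (a solution of $S_1=\cdots=S_s=0$ need not satisfy $\sum_{j=s+1}^{d}a_j S_j=0$), but since the lemma only asserts an upper bound this aside does not affect the argument.
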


Furthermore, for some parameters the following result of Hua~\cite{Hua} gives better estimates.

\begin{lemma}\label{lem:Hua}
Let $f(X)\in \Z[X]$ be a fixed polynomial of   degree $d\ge 1$. Then for each integer $s$ with $1 \le s \le d$, we
have 
\[
I_{2^s}(f,H) \le H^{2^s - s + o(1)},  \qquad  H \to \infty. 
\]
\end{lemma}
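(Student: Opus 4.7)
My plan is to prove Hua's lemma by induction on $s$, following the classical iterated Weyl differencing argument. The base case $s=1$ is immediate: $I_2(f,H)$ counts pairs $(x,y)\in[-H,H]^2$ with $f(x)=f(y)$, and since $f$ has degree $d\ge 1$, each fiber of $f$ contains at most $d$ points, yielding $I_2(f,H)\le d(2H+1)=H^{1+o(1)}$, matching the exponent $2^1-1=1$.

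For the inductive step, I would begin from the Weyl differencing identity
\[
|T(\alpha)|^2=\sum_{|h|\le 2H}U_h(\alpha),\qquad U_h(\alpha)=\sum_{x\in I_h}\e\(\alpha\,\Delta_h f(x)\),
\]
where $\Delta_h f(x)=f(x+h)-f(x)$ is a polynomial of degree $d-1$ in $x$ for $h\ne 0$. Iterating this identity together with Cauchy--Schwarz produces, after $s-1$ steps, an inequality roughly of the shape
\[
|T(\alpha)|^{2^s}\le (4H+1)^{E_s}\sum_{\vec h\in[-2H,2H]^s}\sum_{x\in I_{\vec h}}\e\(\alpha\,\Delta_{h_1}\cdots\Delta_{h_s}f(x)\),
\]
in which the $s$-fold iterated difference polynomial $\Delta_{h_1}\cdots\Delta_{h_s}f$ has degree $d-s\ge 0$ in $x$ and leading coefficient proportional to $h_1\cdots h_s$. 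Integrating over $\alpha\in[0,1]$ and invoking the orthogonality of $\e(\alpha n)$ over $[0,1]$ for nonzero integer $n$ reduces the right-hand side to a count of integer zeros of this differenced polynomial: for $\vec h$ with all $h_i\ne 0$ the polynomial is nonzero of degree $d-s$ and contributes $O(1)$ zeros, while for the $O(H^{s-1})$ degenerate $\vec h$ with some $h_i=0$ it vanishes identically but contributes $O(H)$ per $\vec h$. Both regimes combine to $O(H^s)$ in total.

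The main obstacle is the careful bookkeeping of the exponent $E_s$ in the Weyl iteration. A naive use of Cauchy--Schwarz at every stage would force $E_s$ to be too large and would give only the insufficient bound $I_{2^s}(f,H)\le H^{2^s-1+o(1)}$. The classical refinement of Hua exploits the positivity of $|T(\alpha)|^2$ and applies Cauchy--Schwarz only at alternating levels of the iteration, producing the sharper exponent $E_s=2^s-s-1+o(1)$, which when combined with the $O(H^s)$ total of the zero counts yields the stated bound $H^{2^s-s+o(1)}$. The full details of this delicate iteration appear in Hua's monograph~\cite{Hua}.
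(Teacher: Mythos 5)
The paper does not prove this lemma at all: it is cited directly to Hua's 1938 paper, so there is no argument in the paper to compare against. Reviewing your sketch on its own merits, however, reveals a genuine gap in the final step.

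The pointwise Weyl iteration you describe gives, with best-possible bookkeeping,
\[
|T(\alpha)|^{2^s}\le (CH)^{E_s}\, S_s(\alpha),\qquad S_s(\alpha)=\sum_{|h_1|,\ldots,|h_s|\le 2H}\ \sum_{x}\e\bigl(\alpha\,\Delta_{h_1}\cdots\Delta_{h_s}f(x)\bigr),
\]
with $E_s=2^s-s-1$ (this is exactly the exponent Cauchy--Schwarz at every stage produces; it is not possible to do better, since already at $s=2$ the inequality $|T|^4\le 2H\sum_h|T_1(\alpha;h)|^2$ is sharp). You correctly find $\int_0^1 S_s(\alpha)\,d\alpha\ll H^s$, but then the product is $H^{E_s}\cdot H^s = H^{2^s-1}$, not $H^{2^s-s}$ as you claim. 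The arithmetic simply does not close: combining $E_s=2^s-s-1$ with $O(H^s)$ zeros reproduces exactly the ``insufficient'' bound $H^{2^s-1+o(1)}$ that you acknowledge earlier in your own text.

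What is actually missing is the inductive structure of Hua's argument. One does not integrate the pointwise Weyl bound directly. Instead, for $1\le j\le d-1$, one writes $|T|^{2^{j+1}}=|T|^{2^j}\cdot|T|^{2^j}$ and applies the Weyl bound to only one factor, giving
\[
\int_0^1|T(\alpha)|^{2^{j+1}}\,d\alpha\ \le\ (CH)^{2^j-j-1}\int_0^1 S_j(\alpha)\,|T(\alpha)|^{2^j}\,d\alpha.
\]
The mixed integral $\int S_j\,|T|^{2^j}$ is then bounded by separating the degenerate terms (some $h_i=0$, handled by the inductive hypothesis $I_{2^j}(f,H)\le H^{2^j-j+o(1)}$) from the nondegenerate terms, where one uses the factorisation $\Delta_{h_1}\cdots\Delta_{h_j}f(x)=h_1\cdots h_j\,q(x;\vec h)$ with $q\in\Z[x,\vec h]$ nonconstant in $x$ for $j<d$. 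This forces $h_1\cdots h_j$ to divide the relevant integer, and a divisor-function estimate $d_j(|m|)\ll |m|^{o(1)}$ controls the count. This divisor-function step is exactly where the $o(1)$ in the exponent comes from; your sketch does not account for it, and without the induction-plus-divisor argument the bound cannot be reduced below $H^{2^s-1}$.
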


We are now able to summarise  our bound on $I_{k}(f,H)$  for integers $k \le 11$. 
It is convenient to define
\begin{equation}\label{eq: sigma_k}
\sigma_{k} = s_{k-1}, \qquad k = 4, \ldots, 11,
\end{equation}
which corresponds to the values of $s_t$ in Table~\ref{tab:s_t}.

\begin{lemma}\label{lem:Ik_small k}
Let $f(X)\in \Z[X]$ be a fixed polynomial of   degree $d\ge 3$. Then for 
$k = 4, \ldots, 11$ we have 
\[
I_{k} (f,H) \le H^{k -\sigma_k + o(1)},   \qquad  H \to \infty. 
\]
\end{lemma}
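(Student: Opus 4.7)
The plan is to combine Hua's and Wooley's bounds at the integer ``anchor'' values $k \in \{4, 8, 12\}$ and then fill in the intermediate exponents $k = 5, 6, 7, 9, 10, 11$ by H\"older interpolation on the integral $I_k(f,H)$.

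\smallskip

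First I would record the anchor values. Hua's lemma (Lemma~\ref{lem:Hua}) with $s=2$ gives $I_4(f,H) \le H^{2+o(1)}$ (which settles $k=4$ since $\sigma_4 = 2$), and with $s=3$ gives $I_8(f,H) \le H^{5+o(1)}$ (which settles $k=8$ since $\sigma_8 = 3$); here the hypothesis $d \ge 3$ is used so that $s=3$ is admissible. Wooley's lemma (Lemma~\ref{lem:Wool}) with $s=3$ gives the third anchor $I_{12}(f,H) \le H^{9+o(1)}$. So the three anchors are
\[
I_4 \le H^{2+o(1)}, \qquad I_8 \le H^{5+o(1)}, \qquad I_{12} \le H^{9+o(1)}.
\]

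Next I would handle the ``small'' intermediate values $k \in \{5, 6, 7\}$ by interpolating between $I_4$ and $I_8$. For each such $k$ write
\[
|S(\alpha)|^{k} = |S(\alpha)|^{4\lambda} \cdot |S(\alpha)|^{8(1-\lambda)},
\]
where $S(\alpha) = \sum_{x=-H}^{H} \e(\alpha f(x))$ and $\lambda = (8-k)/4$, and apply H\"older's inequality with exponents $1/\lambda$ and $1/(1-\lambda)$ to get
\[
I_k(f,H) \le I_4(f,H)^{\lambda} \, I_8(f,H)^{1-\lambda} \le H^{2\lambda + 5(1-\lambda) + o(1)}.
\]
Substituting $\lambda = (8-k)/4$ yields the exponents $11/4, 7/2, 17/4$ for $k=5,6,7$, which match $k - \sigma_k$ for $\sigma_5 = 9/4$, $\sigma_6 = 5/2$, $\sigma_7 = 11/4$.

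For the ``large'' intermediate values $k \in \{9, 10, 11\}$, I would carry out the analogous interpolation between the anchors $I_8$ and $I_{12}$, writing $|S|^k = |S|^{8\mu} \cdot |S|^{12(1-\mu)}$ with $\mu = (12-k)/4$ and applying H\"older to obtain
\[
I_k(f,H) \le I_8(f,H)^{\mu} \, I_{12}(f,H)^{1-\mu} \le H^{5\mu + 9(1-\mu)+o(1)}.
\]
Substituting $\mu = (12-k)/4$ gives the exponents $6, 7, 8$ for $k=9,10,11$, matching $k - \sigma_k$ with $\sigma_k = 3$ in each case.

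\smallskip

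There is no genuine obstacle here; the statement is essentially a bookkeeping consequence of the two cited mean-value theorems. The only point to check is that the hypothesis $d \ge 3$ is strong enough to invoke Wooley/Hua at $s = 3$, which it is. Collecting the eight cases completes the proof.
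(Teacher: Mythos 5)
Your proof is correct and follows essentially the same approach as the paper: anchor the estimate at $k=4$ and $k=8$ via Hua's lemma with $s=2$ and $s=3$ (using $d\ge 3$), then fill in the gaps with H\"older interpolation. The only difference is at $k=9,10,11$, where the paper simply applies the trivial inequality $I_k(f,H)\ll H^{k-8}I_8(f,H)$ instead of invoking Wooley at $k=12$ and interpolating between $I_8$ and $I_{12}$; your calculation is correct and yields the identical exponents $6,7,8$, so the two routes are numerically equivalent, with the paper's choice being marginally more economical since it avoids calling on Wooley's bound for this lemma.
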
 

\begin{proof} Clearly,  for $k=4$ and $k=8$,  the result follows from Lemma~\ref{lem:Hua} (recalling that $d\ge 3$), 
 taken with $s=2$ and $s =3$, respectively.
 For $k =9,10, 11$ we simply use the trivial bound
 \[
I_{k}(f,H)  \ll H^{k-8}  I_{8}(f,H) .
\]

Next we consider $k=5$ and note that by the H{\"o}lder inequality 
\[
I_{5}(f,H) \le \(I_{4}(f,H)\)^{3/4}  \(I_{8}(f,H) \)^{1/4} \le  H^{11/4 + o(1)} . 
\]
Similarly 
\[
I_{6}(f,H) \le \(I_{4}(f,H)\)^{1/2}  \(I_{8}(f,H) \)^{1/2} \le  H^{7/2 + o(1)} , 
\]
and 
\[
I_{7}(f,H) \le \(I_{4}(f,H)\)^{1/4}  \(I_{8}(f,H) \)^{3/4} \le  H^{17/4 + o(1)}, 
\]
which concludes the proof. 
\end{proof}

It is easy to see that by the orthogonality of exponential functions, $I_{2k}(f,H)$ 
is the number of solutions to  the Diophantine equation
\[
\sum_{i=1}^k f(x_i) =\sum_{i=1}^k f(y_i) , \qquad  -H  \le x_i, y_i \le H, \ i =1, \ldots, k.
\]
We use a generalised form of this observation, together with Lemma~\ref{lem:Wool}
to estimate the number of solutions of a more general equation. 

Given $k$ polynomials  $f_i(X)\in \Z[X]$ and an  integer vector $\va=(a_1, \ldots, a_k)\in \Z^k$, 
for an integer $H\ge 1$ we denote by  $T_\va(f_1, \ldots, f_k; H)$  the  number 
of solutions to  the Diophantine equation
\[
\sum_{i=1}^k a_i f_i(x_i) =0  , \qquad -H \le x_i \le H, \ i =1, \ldots, k. 
\]

\begin{lemma}\label{lem:SepVars}
Let $f_i(X)\in \Z[X]$, $i=1,\ldots,k$, be  $k$ fixed polynomials of  degrees $\deg f_i \ge d\ge 2$,  
 and let $\va=(a_1, \ldots, a_k)\in \Z^k$ be an arbitrary integer vector with nonzero 
 components $a_i\ne 0$,  $i =1, \ldots, k$. 
Then for each positive  integer $s$ such that  $s \le d$ and $s(s+1) \le k$   we have 
\[T_\va(f_1, \ldots, f_k; H) \le   H^{k-s+o(1)},  \qquad  H \to \infty. 
\]
 \end{lemma}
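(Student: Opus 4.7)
The plan is to use orthogonality of additive characters to express $T_\va(f_1, \ldots, f_k; H)$ as a Fourier integral, decouple the variables by H\"older's inequality, and thereby reduce the task to bounding one-dimensional mean values of exponential sums, which are controlled by Lemma~\ref{lem:Wool}.

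First I would write
\[
T_\va(f_1, \ldots, f_k; H) = \int_0^1 \prod_{i=1}^k S_i(\alpha) \, d\alpha, \qquad S_i(\alpha) = \sum_{x=-H}^H \e\(a_i \alpha f_i(x)\),
\]
and then apply H\"older's inequality with all $k$ exponents equal to $k$ to obtain
\[
T_\va(f_1, \ldots, f_k; H) \le \prod_{i=1}^k \(\int_0^1 |S_i(\alpha)|^k \, d\alpha\)^{1/k}.
\]
For each $i$, the polynomial $g_i(X) = a_i f_i(X)$ is fixed, non-constant, and has the same degree as $f_i$ (since $a_i \ne 0$), so $\int_0^1 |S_i(\alpha)|^k \, d\alpha = I_k(g_i, H)$. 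Using the trivial bound $|S_i(\alpha)| \le 2H+1$ to peel off $k - s(s+1)$ factors, one obtains
\[
I_k(g_i, H) \le (2H+1)^{k-s(s+1)}\, I_{s(s+1)}(g_i, H),
\]
and Lemma~\ref{lem:Wool} (applicable since $s \le d \le \deg g_i$) yields $I_{s(s+1)}(g_i, H) \le H^{s^2+o(1)}$. Combining these gives $\int_0^1 |S_i(\alpha)|^k \, d\alpha \le H^{k-s+o(1)}$, and substituting back into the H\"older bound produces the required estimate $T_\va(f_1, \ldots, f_k; H) \le H^{k-s+o(1)}$.

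The argument is essentially routine once the right decoupling is in place; the only subtlety is to verify that rescaling by $a_i \in \Z \setminus \{0\}$ does not invalidate Lemma~\ref{lem:Wool}. This is immediate, since $a_i f_i$ is a fixed polynomial of the same degree as $f_i$, and the implicit constants (depending on $a_i$ and $f_i$) are absorbed into the $o(1)$ exponent as $H \to \infty$. No genuine obstacle is expected.
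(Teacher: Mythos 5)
Your Fourier-orthogonality-plus-H\"older decoupling is the paper's opening, so the skeleton matches, but there is one important divergence that is not a mere stylistic choice: how the coefficients $a_i$ are disposed of. You absorb the $a_i$ into the polynomial, writing $g_i = a_i f_i$ and applying Lemma~\ref{lem:Wool} to the fixed polynomial $g_i$. The paper instead makes the change of variable $\beta = a_i \alpha$ and uses that $\beta \mapsto \left|\sum_x \e(\beta f_i(x))\right|^k$ has period $1$ (since $f_i \in \Z[X]$) together with the fact that $a_i$ is a nonzero integer; this shows
\[
\int_0^1 \left| \sum_{x=-H}^H \e\(\alpha a_i f_i(x)\) \right|^{k} d\alpha
= \int_0^1 \left| \sum_{x=-H}^H \e\(\alpha f_i(x)\) \right|^{k} d\alpha = I_k(f_i,H),
\]
with the $a_i$ vanishing entirely from the estimate.

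The distinction matters. Your remark that the dependence on $a_i$ is ``absorbed into the $o(1)$'' is only legitimate if $\va$ is held fixed as $H \to \infty$. But the lemma is invoked in the proof of Theorem~\ref{thm: rank poly matr fij Z 1} with coefficients $a_i$ that come from cofactor-type expressions $\rho_j(h)$ of matrices with entries of size $\gg H$, and the paper explicitly records at the end of Section~\ref{sec: sol box Z} that the bound is ``uniform with respect to the coefficients $a_1,\ldots,a_k$.'' Lemma~\ref{lem:Wool} is stated for a \emph{fixed} polynomial, so applying it to $g_i = a_i f_i$ with $a_i$ allowed to grow with $H$ gives no control over the resulting $o(1)$, and your conclusion would not be usable in the intended application. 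So while your argument does prove the lemma under a fixed-$\va$ reading, it misses the periodicity step that is the real content here and that makes the estimate uniform in $\va$; you should add that substitution rather than passing $a_i$ into the polynomial.
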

 
 \begin{proof}
 As in the above, by the  orthogonality of exponential functions, we write
\begin{align*}
 T_\va(f_1, \ldots, f_k; H) &  = 
  \int_0^1  \prod_{i=1}^k  \sum_{x_i=-H}^H \e\(\alpha a_i f_i(x_i)\)  \, d\alpha\\
& \le   \int_0^1  \prod_{i=1}^k \left| \sum_{x_i=-H}^H \e\(\alpha a_i f_i(x_i)\) \right| \, d\alpha .
  \end {align*}
 Hence, by the H{\"o}lder inequality 
\begin{equation}\label{eq: T and I}
  T_\va(f_1, \ldots, f_k; H)   \le   \prod_{i=1}^k \(\int_0^1  \left| \sum_{x_i=-H}^H \e\(\alpha a_i f_i(x_i)\) \right|^k
 \, d\alpha\)^{1/k}.
\end{equation}
Since the function $\e(z)$ is periodic with period $1$,  and $a_i \ne 0$, for each $i =1, \ldots, k$,
 we have 
 \begin{align*}
 \int_0^1  \left| \sum_{x_i=-H}^H \e\(\alpha a_i f_i(x_i)\) \right|^k
 \, d\alpha & = \frac{1}{a_i} \int_0^1  \left| \sum_{x_i=-H}^H \e\(\alpha a_i f_i(x_i)\) \right|^k
 \, d(\alpha a_i)\\
  & = \frac{1}{a_i} \int_0^{a_i}   \left| \sum_{x_i=-H}^H \e\(\beta f_i(x_i)\) \right|^k
 \, d\beta\\
  & =  \int_0^1  \left| \sum_{x_i=-H}^H \e\(\alpha   f_i(x_i)\) \right|^k
 \, d \alpha\\
 &= I_k(f_i,H) 
 \end {align*} 
 (we remark that the above calculation holds for both positive and negative 
 values of $a_i$). 
 Hence we derive from~\eqref{eq: T and I} that 
\begin{equation}
\label{eq:Tk vs Ik}
  T_\va(f_1, \ldots, f_k; H)  \le   \prod_{i=1}^k  I_{k}(f_i,H)^{1/k}.
\end{equation} 
For  $k \ge s(s+1)$ we can use   the trivial bound  
\begin{equation}
\label{eq: Ik - triv}
I_{k}(f_i, H) \le  H^{k - s(s+1)} I_{s(s+1)}(f_i, H)
\end{equation} 
and since $s \le d \le \deg f_i$, $i =1, \ldots, k$, Lemma~\ref{lem:Wool} applies, and after simple calculations implies the 
desired result.   \end{proof} 

 \begin{remark}
\label{rem:Better I_k}
Clearly instead of~\eqref{eq: Ik - triv}, assuming that \
\[s(s+1) \le k < (s+1)(s+2),\]
 one can use the H{\"o}lder inequality as in the proof of Lemma~\ref{lem:Ik_small k},  and estimate 
\[
I_{k}(f, H) \le \(I_{s(s+1)}\(f, H\)\)^{1/\alpha} \(I_{(s+1)(s+2)}\(f, H\)\)^{1-1/\alpha},
\]
with
\[
\alpha = \frac{2 (s+1)}{\(s+1\)\(s+2\) - k}.
\]
However, for large $k$ this leads to somewhat cluttered formulas, 
while providing only marginal improvements. 
\end{remark}

For the small values $k=4,\ldots, 11$, 
we obtain  better bounds on $T_\va(f_1, \ldots, f_k; H)$ than
in Lemma~\ref{lem:SepVars}, namely, using   Lemma~\ref{lem:Ik_small k}
in the inequality~\eqref{eq:Tk vs Ik},  we obtain the following bound.

\begin{lemma}\label{lem:Small k}
Let $f_i(X)\in \Z[X]$, $i=1,\ldots,k$, be  $k$ fixed polynomials of  degrees $\deg f_i \ge d\ge 3$,  
 and let $\va=(a_1, \ldots, a_k)\in \Z^k$ be an arbitrary integer vector with nonzero 
 components $a_i\ne 0$,  $i =1, \ldots, k$. 
Then for  $k=4,\ldots, 11$ we have 
\[
T_\va(f_1, \ldots, f_k; H) \le H^{k-\sigma_k+o(1)}, \qquad  H \to \infty, 
\]
where $\sigma_k$ is given by~\eqref{eq: sigma_k}. 
 \end{lemma}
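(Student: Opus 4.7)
The plan is to bootstrap directly from the inequality~\eqref{eq:Tk vs Ik} derived in the proof of Lemma~\ref{lem:SepVars}. Recall that by the H{\"o}lder inequality, combined with the $1$-periodicity of $\e(\cdot)$ and the change of variable $\beta = \alpha a_i$ (which is legitimate precisely because each $a_i$ is a nonzero integer), one obtains
\[
T_\va(f_1, \ldots, f_k; H) \le \prod_{i=1}^k I_{k}(f_i,H)^{1/k}.
\]
The dependence on $\va$ is thereby eliminated, and the problem is reduced to controlling the individual one-variable mean values $I_k(f_i, H)$.

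Next, I would invoke Lemma~\ref{lem:Ik_small k} directly. Under the hypothesis $d \ge 3$ and for $k \in \{4, \ldots, 11\}$, it supplies the bound $I_k(f_i, H) \le H^{k - \sigma_k + o(1)}$ for each individual $i$. Substituting this into the product displayed above and taking the $k$-th root gives
\[
T_\va(f_1, \ldots, f_k; H) \le \prod_{i=1}^k \(H^{k-\sigma_k+o(1)}\)^{1/k} = H^{k-\sigma_k+o(1)},
\]
which is precisely the stated bound.

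There is no real obstacle here: both genuinely substantive ingredients, namely the H{\"o}lder plus periodicity manoeuvre that reduces $T_\va$ to a product of one-variable mean values, and the interpolation of the bounds of Hua and Wooley into the small-$k$ values tabulated through $\sigma_k = s_{k-1}$, have already been carried out in the proofs of Lemma~\ref{lem:SepVars} and Lemma~\ref{lem:Ik_small k} respectively. The content of the present lemma is simply to record the improvement obtained by using the sharper estimate of Lemma~\ref{lem:Ik_small k} in place of the trivial bound~\eqref{eq: Ik - triv} inside the argument of Lemma~\ref{lem:SepVars}.
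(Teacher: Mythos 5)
Your proof is correct and follows exactly the paper's own argument: the paper also derives the lemma by inserting the bound of Lemma~\ref{lem:Ik_small k} into the inequality~\eqref{eq:Tk vs Ik} established during the proof of Lemma~\ref{lem:SepVars}.
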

 
Next, we can  also use a general bound of Pila~\cite[Theorem~A]{Pila},
 which applies  by Lemma~\ref{lem:diagonalpolyn} (we only use it for $k=3$ but 
 present it in full generality).

\begin{lemma}\label{lem:Pila} Let $f_i(X)\in \Z[X]$, $i=1,\ldots,k$, be  $k$ fixed polynomials of degrees $\deg f_i \ge d\ge 1$,  
 and let $\va=(a_1, \ldots, a_k)\in \Z^k$ be an arbitrary integer vector with nonzero 
 components $a_i\ne 0$,  $i =1, \ldots, k$. 
 Then
 \[
T_\va(f_1, \ldots, f_k; H) \le   H^{k-2+1/d+o(1)}, \qquad  H \to \infty. 
\]
 \end{lemma}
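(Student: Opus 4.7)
The plan is to interpret $T_\va(f_1, \ldots, f_k; H)$ as the number of integer points of sup-norm at most $H$ on the affine hypersurface
\[
V: F(X_1, \ldots, X_k) := \sum_{i=1}^k a_i f_i(X_i) = 0
\]
in $\mathbb{A}^k$, and then invoke Pila's uniform point-counting theorem. Since the lemma is only applied at $k = 3$ (and absolute irreducibility of a sum-of-univariates polynomial via Lemma~\ref{lem:diagonalpolyn} requires $k\ge 3$ anyway), we work in that range.

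The first step is to verify the one substantive hypothesis of \cite[Theorem~A]{Pila}, namely that $F$ is absolutely irreducible. Because each $a_i\ne 0$ and each $f_i$ is non-constant, every summand $a_i f_i(X_i)$ is a non-constant univariate polynomial; with $k\ge 3$, Lemma~\ref{lem:diagonalpolyn} applies verbatim and gives absolute irreducibility of $F$ over $\overline{\Q}$. Pila's theorem then produces, for every $\varepsilon>0$,
\[
T_\va(f_1, \ldots, f_k; H) \ll H^{k-2+1/D+\varepsilon},
\]
where $D=\deg F=\max_{1\le i\le k}\deg f_i$ and the implied constant depends only on $k$, $D$ and $\varepsilon$. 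Since $\deg f_i\ge d$ for each $i$, we have $D\ge d$ and therefore $1/D\le 1/d$, so the right-hand side is bounded by $H^{k-2+1/d+\varepsilon}$. Letting $\varepsilon\to 0$ converts this into the $o(1)$ exponent stated in the lemma.

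There is no real obstacle to this argument: the entire analytic content is shouldered by Pila's theorem, and the only geometric verification required is absolute irreducibility, which is precisely what Lemma~\ref{lem:diagonalpolyn} supplies. The only point warranting care is the comparison $D\ge d$ so that the bound is expressed in terms of the uniform lower degree bound $d$ rather than the (potentially larger) actual degree of $F$; this is immediate from the hypothesis.
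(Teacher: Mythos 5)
Your argument is exactly what the paper intends: the paper does not supply a proof of Lemma~\ref{lem:Pila}, it simply remarks that Pila's Theorem~A ``applies by Lemma~\ref{lem:diagonalpolyn},'' and your write-up fills in those details correctly. Since each $a_i f_i(X_i)$ is non-constant, Lemma~\ref{lem:diagonalpolyn} gives absolute irreducibility of $F=\sum a_i f_i(X_i)$ for $k\ge 3$, and the exponent $k-2+1/D+\varepsilon$ from Pila's bound is dominated by $k-2+1/d+\varepsilon$ because $D=\deg F=\max_i\deg f_i\ge d$. Your restriction to $k\ge 3$ is also the right call: the paper notes the lemma is only invoked at $k=3$, and for $k=2$ the sum $a_1f_1(X_1)+a_2f_2(X_2)$ can be reducible (e.g.\ $X_1^2-X_2^2$), so the stated bound does not follow from Pila's theorem alone there.
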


It is important to observe that for $d \ge 3$  all bounds of this section 
are of the form
\[
T_\va(f_1, \ldots, f_k; H)  \le H^{k - \rho+ o(1)}, 
\]
which is uniform with respect to the coefficients $a_1, \ldots, a_k$, and 
\begin{itemize}
\item for $k \ge 12$, we use  Lemma~\ref{lem:SepVars} (with some $s \ge 3$ but with $s(s+1) \le k$),
which  allows us to take  $\rho \ge 3$;
\item for $11\ge k \ge 4$, we use  Lemma~\ref{lem:Small k}  which  allows us to take  $\rho =\sigma_k$;
\item for $k=3$,  we use  Lemma~\ref{lem:Pila},  which  allows us to take $\rho= 2-1/d$.
\end{itemize}

\subsection{Solutions to polynomial congruences in a box}  
\label{sec: sol box Fp}

As we have mentioned in Section~\ref{sec: Res Fp}, 
 some analogues of the results from Section~\ref{sec: sol box Z} 
can be extracted from~\cite{Chang,KMS}. This leads to a large 
variety of results. We concentrate on the simplest (at least in typographic 
sense) case of small boxes. 
Since the argument is a discrete version of that of Section~\ref{sec: sol box Z}
we are rather brief in our exposition here.

Note that below we freely switch between the language of congruences and 
the language of finite fields. 

For a polynomial  $f(X)\in \F_p[X]$, we denote 
\[
J_k(f,H,p) =  \frac{1}{p} \sum_{\alpha \in \F_p} \left| \sum_{x=-H}^H \ep\(\alpha f(x)\) \right|^k, 
\]
where $\ep(z) = \exp(2\pi i z/p)$.

The following bound is a special case of~\cite[Theorem~1.3]{KMS}.

\begin{lemma}\label{lem:KMS}
Let $f(X)\in \Z[X]$ be a fixed polynomial of  degree $d\ge 2$.  Then,  for 
\[
H \le p^{2/(d(d+1))}
\] 
we have 
\[
J_4(f,H,p) \le H^{2+ o(1)}, \qquad  H \to \infty.
\]
\end{lemma}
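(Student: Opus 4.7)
The plan is to reduce the congruence problem modulo $p$ to an integer equation and then invoke Hua's inequality, rather than invoking the cited general bound from~\cite{KMS} as a black box. The key observation is that the range $H \le p^{2/(d(d+1))}$ is well below the threshold at which the moduli $p$ become ``visible'' for the fourth moment, so the problem collapses to the corresponding integer one.

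First, by orthogonality of the additive characters of $\F_p$, expanding the fourth power yields
\[
J_4(f,H,p) = \#\left\{(x_1,x_2,x_3,x_4) \in [-H,H]^4 :~ f(x_1)+f(x_2) \equiv f(x_3)+f(x_4) \pmod p\right\}.
\]
Second, I use the size of the box. Since $d \ge 2$ and $H \le p^{2/(d(d+1))}$, we have $H^d \le p^{2/(d+1)} \le p^{2/3}$. As $f$ has fixed integer coefficients,
\[
\left| f(x_1)+f(x_2)-f(x_3)-f(x_4) \right| \le 4\|f\|\, H^d = o(p),
\]
so for $p$ sufficiently large the congruence must be an actual equation over $\Z$. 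Hence
\[
J_4(f,H,p) = \#\left\{(x_1,x_2,x_3,x_4) \in [-H,H]^4 :~ f(x_1)+f(x_2) = f(x_3)+f(x_4)\right\} = I_4(f,H),
\]
using the orthogonality interpretation of $I_4(f,H)$ recorded just before Lemma~\ref{lem:SepVars}.

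Finally, I apply Lemma~\ref{lem:Hua} with $s=2$, which is legitimate because $d\ge 2$, to obtain $I_4(f,H) \le H^{2+o(1)}$, and the desired bound follows. There is no serious obstacle: the hypothesis on $H$ has been chosen so that the reduction step is immediate, and everything after that is the classical Hua estimate. In fact this short route handles the slightly wider range $H = o(p^{1/d})$; the tighter threshold $p^{2/(d(d+1))}$ in the statement is likely inherited from the more general theorem in~\cite{KMS} where it controls higher moments $J_k$ uniformly, rather than being intrinsic to the $k=4$ case at hand.
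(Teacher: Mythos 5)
Your argument is correct for the lemma as stated, and it is a genuinely different (and more elementary) route from the paper's: the paper gives no proof at all here and simply quotes~\cite[Theorem~1.3]{KMS} as a black box. For a fixed $f\in\Z[X]$, the inequality $H^d\le p^{2/(d+1)}=o(p)$ does collapse the fourth-moment congruence into an integer equation for large $p$, and Hua's bound $I_4(f,H)\le H^{2+o(1)}$ (Lemma~\ref{lem:Hua} with $s=2$) finishes the job; as you note, this actually covers the wider range $H=o(p^{1/d})$. Your diagnosis of where the threshold $p^{2/(d(d+1))}$ comes from is also essentially right, and the paper itself makes this precise in Section~\ref{sec:improve}: for polynomials over $\F_p$ the coefficients may be of size comparable to $p$, so the naive bound $\lvert f(x_i)\rvert\le\lVert f\rVert H^d$ fails, but after the G\'omez-P\'erez--Shparlinski normalisation~\cite[Lemma~6]{Go-PSh}, which replaces the coefficients by representatives of size $\ll p^{1-2j/(e(e+1))}$, the identity $J_{2k}(f,H,p)=I_{2k}(g,H)$ holds exactly for $H\ll p^{2/(e(e+1))}$, which is the threshold in the lemma. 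One small caution worth flagging: Lemma~\ref{lem:Energy Fp}, which invokes Lemma~\ref{lem:KMS}, is stated for $f_i\in\F_p[X]$, and in that setting your unnormalised reduction does not apply directly; the black-box KMS bound (or the normalisation just described) is what the paper really needs downstream. That is a minor infelicity in how the paper phrases Lemma~\ref{lem:KMS} (over $\Z$ rather than over $\F_p$) rather than a flaw in your proof of the statement as written.
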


Given $k$ polynomials  $f_i(X)\in \F_p[X]$ and a  vector $\va=(a_1, \ldots, a_k)\in \F_p^k$, 
for an integer $H\ge 1$ we denote by  $T_\va(f_1, \ldots, f_k; H,p)$  the  number 
of solutions to  the congruence
\[
\sum_{i=1}^k a_i f_i(x_i) \equiv 0 \pmod  p , \qquad -H \le x_i \le H, \ i =1, \ldots, k. 
\]

\begin{lemma}\label{lem:Energy Fp}
Let $f_i(X)\in \F_p[X]$, $i=1,\ldots,k$, be  $k$ fixed polynomials of  degrees $\deg f_i \ge d\ge 2$,  
 and let $\va=(a_1, \ldots, a_k)\in \F_p^k$ be an arbitrary  vector with nonzero 
 components $a_i\ne 0$,  $i =1, \ldots, k$. 
Then for $H \le p^{2/(d(d+1))}$,  
we have 
\[
T_\va(f_1, \ldots, f_k; H,p ) \le  
\begin{cases} H^{3/2+o(1)} & \text{if}\ k = 3,\\
H^{k-2+o(1)} & \text{if}\ k \ge 4, 
\end{cases}  \qquad  H \to \infty. 
\]
 \end{lemma}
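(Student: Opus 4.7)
The proof will follow the same Fourier-analytic pattern as Lemma~\ref{lem:SepVars} for the integer case, with the integral over $[0,1]$ replaced by the normalised sum over $\F_p$ and Lemma~\ref{lem:KMS} playing the role of the Vinogradov-type mean value theorem. First, by orthogonality of additive characters mod $p$, I would write
\[
T_\va(f_1, \ldots, f_k; H,p) = \frac{1}{p}\sum_{\alpha \in \F_p} \prod_{i=1}^k S_i(\alpha),
\qquad S_i(\alpha) = \sum_{x=-H}^H \ep(\alpha a_i f_i(x)),
\]
bound the product of character sums by the product of absolute values, and apply H\"older's inequality to get
\[
T_\va(f_1, \ldots, f_k; H,p) \le \prod_{i=1}^k \left(\frac{1}{p}\sum_{\alpha \in \F_p} |S_i(\alpha)|^k\right)^{1/k}.
\]
Since $a_i \not\equiv 0 \pmod p$ the map $\alpha \mapsto a_i\alpha$ is a bijection of $\F_p$, so the inner sum is exactly $J_k(f_i,H,p)$.

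For $k \ge 4$, the plan is to invoke Lemma~\ref{lem:KMS} under the hypothesis $H \le p^{2/(d(d+1))}$, which gives $J_4(f_i, H, p) \le H^{2+o(1)}$, and then use the trivial monotonicity
\[
J_k(f_i,H,p) \le (2H+1)^{k-4}\, J_4(f_i,H,p) \le H^{k-2+o(1)},
\]
which together with the Hölder bound above yields $T_\va \le H^{k-2+o(1)}$, as required.

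For $k = 3$, I would interpolate between $J_2$ and $J_4$ using the Cauchy--Schwarz inequality in the form
\[
\sum_{\alpha \in \F_p} |S_i(\alpha)|^3 \le \Bigl(\sum_{\alpha} |S_i(\alpha)|^2\Bigr)^{1/2}\Bigl(\sum_{\alpha}|S_i(\alpha)|^4\Bigr)^{1/2},
\]
which translates to $J_3(f_i,H,p) \le \bigl(J_2(f_i,H,p) J_4(f_i,H,p)\bigr)^{1/2}$. For $J_2$, I would note that $J_2(f_i,H,p)$ counts pairs $(x_1,x_2) \in [-H,H]^2$ with $f_i(x_1) \equiv f_i(x_2) \pmod p$; for each choice of $x_1$, the polynomial $f_i(Y) - f_i(x_1)$ has at most $\deg f_i$ roots in $\F_p$, giving $J_2(f_i,H,p) \ll H$. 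Combined with Lemma~\ref{lem:KMS}, this yields $J_3(f_i,H,p) \le H^{3/2+o(1)}$, and hence $T_\va \le H^{3/2+o(1)}$.

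No step here looks delicate: the only ingredient needing care is ensuring that the range $H \le p^{2/(d(d+1))}$ really is small enough both to apply Lemma~\ref{lem:KMS} and to guarantee that the trivial bound for $J_2$ via distinct residues holds (which it does since $d \ge 2$ gives $H \ll p^{1/3}$). The structural obstacle, compared to the integer case, is that we cannot appeal directly to a Hua- or Wooley-type estimate for larger moments $J_k$, but since we only target the exponent $k-2$ this loss is immaterial.
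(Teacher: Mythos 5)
Your proof is correct and follows essentially the same route as the paper: orthogonality and H\"older to reduce $T_\va$ to $\prod_i J_k(f_i,H,p)^{1/k}$, the trivial monotonicity $J_k \le H^{k-4+o(1)} J_4$ for $k\ge 4$, and Lemma~\ref{lem:KMS} as the key input. The only difference is at $k=3$: the paper simply applies H\"older to get $J_3 \le J_4^{3/4}$, whereas you interpolate via Cauchy--Schwarz as $J_3 \le (J_2 J_4)^{1/2}$ and bound $J_2 \ll H$ by counting roots of $f_i(Y)-f_i(x_1)$ over $\F_p$ (valid since $H \le p^{2/(d(d+1))} \le p^{1/3}$ ensures each residue class contributes $O(1)$ points of $[-H,H]$). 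Both interpolations yield $H^{3/2+o(1)}$; the paper's is a touch more economical since it avoids the separate $J_2$ estimate, while yours makes the elementary diagonal structure explicit. Either way the argument is sound.
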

 
 \begin{proof} 
 As in the proof of Lemma~\ref{lem:SepVars}, using the orthogonality of exponential sums and the H\" older inequality, one obtains the analogue of~\eqref{eq: T and I}, that is,
\begin{equation}
\label{eq:T and J}
 T_\va(f_1, \ldots, f_k; H,p )\le \prod_{i=1}^k J_k(f_i,H,p)^{1/k}.
\end{equation}

For each $i=1,\ldots,k$, using the  H{\"o}lder inequality for $k = 3$  gives us  $J_3(f_i,H,p) \le J_4(f_i,H,p)^{3/4}$
 while for $k \ge 4$, we have the trivial bound
\begin{equation}
\label{eq:Triv Jk}
J_k(f_i,H,p) \le J_4(f_i,H,p) H^{k-4+ o(1)}. 
\end{equation} 
We now see that Lemma~\ref{lem:KMS} implies
 \[
J_k(f_i,H,p) \le 
\begin{cases} H^{3/2+o(1)} & \text{if}\ k = 3,\\
H^{k-2+o(1)} & \text{if}\ k \ge 4, 
\end{cases} \qquad  H \to \infty, 
\]
provided $H \le p^{2/(d(d+1))}$. 

Plugging these in~\eqref{eq:T and J}, we derive the 
desired bound.  
 \end{proof}

We now present another important technical tool, given by 
the work of Fouvry~\cite{Fouv00}.  

For a  
polynomial  in $k \ge 2$ variables
\[
F(X_1, \ldots, X_k) \in \Z[X_1, \ldots, X_k]
\] 
and an integer  $H\ge 1$, 
we denote by $T_F(H, p)$ the number of solutions to the congruence
\[
F(x_1, \ldots, x_k) \equiv 0 \pmod p, \qquad (x_1, \ldots, x_k) \in [-H, H]^k,
\]
modulo a prime $p$.  

We also define $T_F(p) = T_F((p-1)/2, p) $, that  is, $T_F(p)$ is 
the number of solutions to  the above congruence with unrestricted variables from  $\F_p$. 
Then, the main result of~\cite[Theorem]{Fouv00}, in the case of one polynomial takes the following form.

For a polynomial $F\in\Z[X_1,\ldots,X_k]$, we denote by $\cZ(F)$ the algebraic subset of $\C^k$ of all zeros of $F$ in $\C^k$.

\begin{lemma}
\label{lem:box} Let 
\[
F(X_1, \ldots, X_k) \in \Z[X_1, \ldots, X_k]
\] 
be an irreducible over $\C$ 
polynomial  in $k \ge 2$ variables, such that the hypersurface $\cZ(F)$ is not contained in any hyperplane of $\C^k$. 
Then for any  prime $p$,  
we have 
\[
T_F(H, p) = \(\frac{H}{p}\)^k T_F(p)  + O\(p^{(k-1)/2 + o(1)} + H^{k-2}  p^{1/2 + o(1)}\)
\]
as $p \to \infty$. 
\end{lemma}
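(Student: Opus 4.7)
The plan is to carry out additive Fourier completion on $\F_p^k$, reducing the point count to a sum of exponential sums over the hypersurface $\cZ(F)(\F_p)$, and then to apply Deligne's square-root estimates. First, expanding the characteristic function of the interval $[-H,H] \subset \F_p$ in additive characters, one obtains
\[
T_F(H,p) = \frac{1}{p^k}\sum_{\mathbf{h}\in \F_p^k} \widehat{Z}(\mathbf{h})\prod_{i=1}^k K_H(h_i),
\]
where $K_H(h) = \sum_{|y|\le H}\ep(-hy)$ is the standard Fej\'er-type kernel satisfying $|K_H(h)| \ll \min\{H,\, p/|h|\}$ (with $|h|$ denoting the least absolute residue), and $\widehat{Z}(\mathbf{h}) = \sum_{x\in \F_p^k,\ F(x)=0}\ep(\mathbf{h}\cdot x)$ is the Fourier transform of the counting measure on the variety.

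The term $\mathbf{h}=\mathbf{0}$ contributes the main term $(2H+1)^k T_F(p)/p^k$, which matches the shape $(H/p)^k T_F(p)$ claimed in the lemma up to a discrepancy that is absorbed into the stated error. For $\mathbf{h}\ne \mathbf{0}$, the goal is to obtain the square-root cancellation bound $|\widehat{Z}(\mathbf{h})| \ll p^{(k-1)/2}$ via Deligne's theorem (Weil II) applied to the Artin--Schreier sheaf attached to the linear form $\mathbf{h}\cdot x$ on $\cZ(F)$. The two hypotheses of the lemma are precisely what is needed here: absolute irreducibility of $F$ over $\C$ (and hence over $\F_p$ for large $p$) provides the geometric irreducibility of the hypersurface, while the condition that $\cZ(F)$ is not contained in any hyperplane of $\C^k$ guarantees that for every non-zero $\mathbf{h}$ the restriction of $\mathbf{h}\cdot x$ to $\cZ(F)$ is non-constant, which prevents the sheaf from becoming geometrically trivial along $\cZ(F)$.

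Granted this input, the first error term follows from combining the Deligne bound with the standard estimate $\sum_{h\in\F_p}|K_H(h)|\ll p\log p$:
\[
\frac{1}{p^k}\sum_{\mathbf{h}\ne\mathbf{0}}|\widehat{Z}(\mathbf{h})|\prod_i|K_H(h_i)| \ll \frac{p^{(k-1)/2}}{p^k}\cdot(p\log p)^k \ll p^{(k-1)/2+o(1)}.
\]
The second, sharper error $H^{k-2}p^{1/2+o(1)}$ should be obtained by a more delicate argument that isolates coordinates: fixing $k-2$ of the variables $x_3,\ldots,x_k$ (contributing the factor $H^{k-2}$ trivially), the residual congruence $F(x_1,x_2,x_3,\ldots,x_k)\equiv 0$ becomes a two-variable problem on a curve to which Weil's classical bound for plane-curve character sums applies, yielding an $O(p^{1/2+o(1)})$ estimate on each slice. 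Interpolating between the two regimes via a dyadic decomposition of the $\mathbf{h}$-variables gives the dual statement.

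The principal obstacle is the uniform square-root bound $|\widehat{Z}(\mathbf{h})| \ll p^{(k-1)/2}$ for all non-zero $\mathbf{h}$. One must check, independently of $\mathbf{h}$, that the associated $\ell$-adic cohomology on $\cZ(F)$ is pure of the expected weight and has no trivial quotient; this is exactly where absolute irreducibility and the non-containment in any hyperplane enter jointly, since a single degenerate $\mathbf{h}$ — for which $\mathbf{h}\cdot x$ is constant on $\cZ(F)$ — would contribute a main-term-sized summand and destroy the error estimate.
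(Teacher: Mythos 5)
This lemma is not proved in the paper; it is quoted verbatim from Fouvry~\cite{Fouv00}, whose title already signals the actual engine: a stratification theorem of Katz and Laumon for trigonometric sums. Your Fourier-completion framework is the right first move and does correctly reduce the problem to controlling the sums $\widehat{Z}(\mathbf{h})$, but the central claim of your proposal---the uniform square-root bound $|\widehat{Z}(\mathbf{h})|\ll p^{(k-1)/2}$ for \emph{every} nonzero $\mathbf{h}$---is false in the generality of the lemma, and this is exactly the difficulty that Katz--Laumon and Fouvry were designed to overcome.

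Absolute irreducibility of $F$ plus the non-containment of $\cZ(F)$ in any hyperplane guarantee that the linear form $\mathbf{h}\cdot x$ is non-constant on $\cZ(F)$ for every nonzero $\mathbf{h}$, hence that the attached Artin--Schreier sheaf is geometrically nontrivial. That gives \emph{some} cancellation, but it does not give purity of weight $k-1$. For a singular affine hypersurface (and the determinant hypersurface in the paper's application is very singular), the compactly supported cohomology of the restricted Artin--Schreier sheaf is neither concentrated in one degree nor pure, and for $\mathbf{h}$ on the exceptional locus---in the smooth projective case, on the dual variety; in general, on a proper closed conic subvariety of the dual $\mathbf{h}$-space---one gets $|\widehat{Z}(\mathbf{h})|\gg p^{k/2}$ or worse. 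A single such $\mathbf{h}$ would overwhelm your $\frac{1}{p^k}(p\log p)^k$ estimate. The Katz--Laumon theorem supplies a decreasing filtration $\F_p^k=X_0\supset X_1\supset X_2\supset\cdots$ of closed subsets, with $\mathrm{codim}\,X_j\ge j$, such that on $X_j\setminus X_{j+1}$ one has $|\widehat{Z}(\mathbf{h})|\ll p^{(k-1+j)/2}$; Fouvry then sums the kernel $\prod_i K_H(h_i)$ over each stratum, and it is precisely the $j\ge 1$ strata that produce the second error term $H^{k-2}p^{1/2+o(1)}$. Your ``fix $k-2$ coordinates and use Weil on a curve'' suggestion does not substitute for this: it is a different counting argument that cannot be glued onto the Fourier side by a dyadic decomposition in $\mathbf{h}$ as you describe, because the decomposition needed is along the Katz--Laumon strata, not along dyadic ranges of $|\mathbf{h}|$. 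In short, your proposal identifies the correct hypotheses and the correct Fourier reduction, but the claimed uniform Deligne bound is a genuine gap, and filling it requires the Katz--Laumon stratification that the paper imports wholesale via the citation to Fouvry.
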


\section{Proofs of main results} 

\subsection{Proof of Theorem~\ref{thm: rank poly matr fij Z 1}}
\label{sec:T1 2}
As in~\cite{BlLi} we observe that  it is enough to estimate the number $L_{\vf, r}^*(H)$ 
of matrices $\vX \in \cM_{\vf}(H)$ which are of rank $r$ and such that the top left $r \times r$ 
minor $\vX_r = \(f_{i,j}\(x_{i,j}\)\)_{1\le i,j\le r} $ is non-singular.  We now fix the values of 
such $x_{i,j}$, $  i,j=1, \ldots, r$, in 
\begin{equation}
\label{eq:contrib Xr}
\fA \ll H^{r^2} 
\end{equation}
ways. 

We now observe that  for every integer  $h$, $r < h \le m$, once the minor $\vX_r$ is
fixed, the $h$-th   row of every matrix  $U$ which is counted by  $L_{\vf, r}^*(H)$ is a unique
linear combination of the first $r$ rows with  coefficients $\(\rho_{1}(h) , \ldots , \rho_{r}(h)\) \in \Q^r$.

We say that the $(m-r)\times r$ matrix  
\[
\vY_r =\(f_{h,j}\(x_{h,j}\)\)_{\substack{r+1\le h\le m \\ 1 \le j\le r}}
\] 
(which is directly under $\vX_r$ in $\vX$) is of type $t\ge 0$ if $t$ is the largest number of  non-zeros among 
the coefficients $\(\rho_{1}(h) , \ldots , \rho_{r}(h)\) \in \Q^r$ taken over all 
$h = r+1, \ldots, m$. 

In particular type $t = 0$ corresponds to the zero matrix $\vY_r$. 

Clearly if the  $h$-th   row is of type $t$, that is, $t$ of the coefficients in $\(\rho_{1}(h) , \ldots , \rho_{r}(h)\) \in \Q^r$
 are non-zero, say  $\rho_{1}(h) , \ldots , \rho_{t}(h) \ne 0$, and thus we can choose a $t \times t$ non-singular  
sub-matrix  of the matrix  $\(f_{i,j}\(x_{i,j}\)\)_{\substack{1\le i\le t \\ 1 \le j\le r}}$ . 
Again, without loss of generality we can assume 
that this is 
\[ 
\vX_t =  \(f_{i,j}\(x_{i,j}\)\)_{1\le i, j\le t}.
\] 
This means that each of $O(H^t)$ possible choices 
of $X_{h,1},\ldots,X_{h,t}$  defines the coefficients  
\[
\( \rho_{1}(h) , \ldots , \rho_{r}(h)\) = \( \rho_{1}(h) , \ldots , \rho_{t}(h), 0, \ldots, 0\)
\]
and hence the rest of the values $f_{h,j}(x_{h,j})$, $j = t+1, \ldots, r$. 
Note that this bound is monotonically increasing with $t$ and thus applies to every row
of  matrices $\vY_r$ of type $t$. 

Therefore, for each fixed $\vX_r$, there are 
\begin{equation}
\label{eq:contrib Yr-t}
\fB_t \ll H^{t(m-r)}
\end{equation}
matrices $\vY_r$ of type $t$ (note that this is also true for $t = 0$).

Let now a matrix $\vX_r$ and a matrix $\vY_r$ of type $t$ be  both fixed. 
Hence there is an $h$, $r+1\le h \le m$, such that the $h$-th row can be written as a linear combination of the top $r$ rows.  
 As before, without loss of generality we can assume that the  vector
$ \( \rho_{1}(h) , \ldots , \rho_{r}(h)\) $ contains exactly $t$ non-zero components, 
which  for each $j = r+1, \ldots, n$ leads to an equation 
\begin{equation}
\label{eq:lin rel}
\rho_{1}(h) f_{1,j}\(x_{1,j}\)  + \ldots  + \rho_{r}(h)   f_{r,j}\(x_{r,j}\) = f_{h,j}\(x_{h,j} \)
\end{equation}
with exactly $t+1$ non-zero coefficients. 

After this, all other elements $f_{i,j}\(x_{i,j}\)$, $i \in \{r+1, \ldots, m \} \setminus \{h\}$ and $j=r+1,\ldots,n$, are uniquely  defined by an analogue of the 
relation~\eqref{eq:lin rel},  since now for every $i \in \{r+1, \ldots, m \} \setminus \{h\}$, the left hand side is fixed.

Let $\fC_{t}$, $j = r+1, \ldots, n$, be the largest (taken over all choices of $h\in  \{r+1, \ldots, m \}$ and $j\in  \{r+1, \ldots, n \}$) number of solutions to~\eqref{eq:lin rel} 
in variables $\(x_{1,j},  \ldots , x_{r,j}, x_{h,j}\) \in [-H, H]^{r+1}$.

 Then we can summarise the 
above discussion as the bound 
\begin{equation}
\label{eq: L vs ABC}
L_{\vf, r}^*(H)   \ll \fA   \sum_{t =0}^r \fB_t \fC_t^{n-r}.
\end{equation} 

Under the condition of Theorem~\ref{thm: rank poly matr fij Z 1}, to estimate   $\fC_t$, we apply:
\begin{itemize}
\item the trivial bound $ \fC_t \ll H^r$  if $t \in\{0,1\}$;  
\item  the bound $ \fC_t \ll H^{r-1+1/d+o(1)}$ of Lemma~\ref{lem:Pila}  if $  t =2$; 
 \item  the bound $ \fC_t \ll H^{r+1-s_t+o(1)}$,  which combines  Lemma~\ref{lem:Small k}  if $ 3 \le  t  \le 10$ 
 and Lemma~\ref{lem:SepVars} (used with $s=s_t$  if $t \ge 11$, 
 where,  as before, $s_t\ge 3$ is the largest integer $s\le d$ with $s(s+1)\le t+1$. 
\end{itemize}

Hence,  combining the above bounds with~\eqref{eq:contrib Xr} and~\eqref{eq:contrib Yr-t}
and substituting in~\eqref{eq: L vs ABC} 
we obtain the following contributions to $L_{\vf, r}^*(H) $ for $t =0, \ldots, r$.

For $t \in\{0,1\}$ the total contribution $\fL_{0,1}$ satisfies
\begin{equation}\label{eq:t in [0,1]}
\begin{split}
\fL_{0,1} &\ll H^{r^2} \(\( H^r\)^{n-r}+H^{m-r}\( H^r\)^{n-r}\) \\
&  \ll H^{r^2} H^{m-r}\( H^r\)^{n-r}  = H^{m +nr - r} . 
\end{split}
\end{equation} 

For $t = 2$ the total contribution $\fL_{2}$ satisfies 
\begin{equation}\label{eq:t = 2}
\begin{split}
\fL_{2} &\le H^{r^2+o(1) }H^{2(m-r)} \( H^{r-1+1/d}\)^{n-r}  \\
& = H^{2m +n(r-1+1/d) - r (1+1/d)+o(1)} . 
\end{split}
\end{equation} 

Finally, for  $ t \ge 3$, the total contribution $\fL_{\ge 3}$ satisfies 
\begin{equation}\label{eq:t ge 3}
\begin{split}
\fL_{\ge 3}&\le H^{r^2+o(1) }\sum_{t =3}^r   H^{t(m-r)} \( H^{r+1-s_t}\)^{n-r}  \\
& =\max_{t = 3, \ldots, r} H^{tm +n(r+1-s_t) - r(t +1-s_t) +o(1)} . 
\end{split}
\end{equation} 

Substituting the bounds~\eqref{eq:t in [0,1]},  \eqref{eq:t = 2}, 
and~\eqref{eq:t ge 3} in the inequality 
\[
L_{\vf, r}(H) \ll L_{\vf, r}^*(H) \le \fL_{0,1}  + \fL_{2}   
+ \fL_{\ge 3} , 
\] 
which implies that 
 \[
L_{\vf, r}(H) \le H^{m+nr-r+\widetilde \Delta(d,m,n,r)+o(1)}
\] 
with
\begin{align*}
\widetilde \Delta(d,m,n,r) & = \max
\bigl\{0, \, m -n + (n-r)/d,\\  
& \qquad  \qquad  \qquad   \max_{t =3, \ldots, r}\{ (t-1)m -n(s_t-1) - r(t -s_t)\}\bigr\}.
\end{align*}

It remains to show that the term $ m -n + (n-r)/d$ in $\widetilde \Delta(d,m,n,r)$ never dominates
and thus can be dropped leading to 
\begin{equation}\label{eq:Deltas}
\widetilde \Delta(d,m,n,r) =  \Delta(d,m,n,r)
\end{equation}  
for $d \ge 3$ and $r \ge 4$.

Since $m -n + (n-r)/d \le m -n + (n-r)/3$,  it is sufficient to consider the case of $d=3$. 
For this we notice that  we can clearly assume that $m -n + (n-r)/3 \ge 0$ as otherwise~\eqref{eq:Deltas}
is trivial. 
Thus
\begin{equation}\label{eq:large m}
m  \ge  n -  (n-r)/3 = (2n+r)/3.
\end{equation} 
It is  sufficient to show that the term $3m -5n/4  - 7r/4$,  which corresponding to $t = 4$ in $\widetilde \Delta(d,m,n,r)$, 
satisfies 
\begin{equation}\label{eqT2 vs T3}
3m -5n/4  - 7r/4 \ge   m -n + (n-r)/3, 
\end{equation} 
which is equivalent to 
\[
2m \ge n/4 + 7r/4 +  (n-r)/3 = 7n/12 + 17r/12, 
\]
which in turn follows from~\eqref{eq:large m} since for  $n > r$ we have
\[
2(2n+r)/3 \ge 7n/12 + 17r/12.
\]
This implies~\eqref{eqT2 vs T3}, which means that~\eqref{eq:Deltas} 
holds,  and this concludes the proof.

\subsection{Proof of Corollary~\ref{cor:det}}
We can assume that $a \ne 0$ as otherwise the result follows from~\eqref{eq:Sing Matr}. 
We now write 
\[
\det \(f_{i,j}\(X_{i,j}\)\)_{1\le i,  j\le n}
= \sum_{h=1} (-1)^{h-1}f_{1,h}\(X_{1,h}\) D_h, 
\]
where $D_j$ are the determinants of the minors supported on the variables 
$X_{i,j}$, $2 \le i \le n$, $1\le j \le n$, $j \ne h$.

We see from Lemmas~\ref{lem:SepVars} and~\ref{lem:Small k}
that the solutions with $D_h \ne 0$, $h=1, \ldots,  n$, contribute at most 
\[
H^{n(n-1)} H^{n-s_{n-1} + o(1)}  = H^{n^2-s_{n-1}+o(1)}.
\]

On the other hand, by~\eqref{eq:Sing Matr}, using that $a \ne0$, we see that 
the contribution from other solutions can be 
estimated as 
\[
H^{(n-1)^2-s_{n-2}+o(1)} H^{n-1} H^{n-1}  =  H^{n^2-s_{n-2} - 1+o(1)}.
\]
Since $s_{n-1}\le s_{n-2}+1$, the result follows.

\subsection{Proof of Theorem~\ref{thm: rank poly matr fij Fp}} 
We mimic the proof of Theorem~\ref{thm: rank poly matr fij Z 1}, but we use the bounds of Lemma~\ref {lem:Energy Fp} instead of the bounds 
of Section~\ref{sec: sol box Z}.

In particular,  we introduce the same quantities $\fA$, $\fB_t$ and $\cC_t$, but defined for matrices 
over $\F_p$. Then, 
 under the condition of Theorem~\ref{thm: rank poly matr fij Fp}, to estimate   $\fC_t$, we apply:
\begin{itemize}
\item the trivial bound $ \fC_t \ll H^r$  if $t \in\{0,1\}$;  
\item  the bound $ \fC_t \ll H^{r-1/2+o(1)}$ of Lemma~\ref{lem:Energy Fp}  if $  t =2$; 
 \item  the bound $ \fC_t \ll H^{r-1+o(1)}$ of Lemma~\ref{lem:Energy Fp}   if $ t \ge 3$. 
\end{itemize}

Hence, instead of the bounds~\eqref{eq:t in [0,1]},  \eqref{eq:t = 2},  
and~\eqref{eq:t ge 3} we now have the following
estimates.
 
 For $t \in\{0,1\}$ the total contribution $\fL_{0,1}$ satisfies
\begin{equation}\label{eq:t in [0,1] p}
\fL_{0,1} \ll  H^{m +nr - r} . 
\end{equation} 
(exactly as~\eqref{eq:t in [0,1]}). 

For $t = 2$ the total contribution $\fL_{2}$ now satisfies 
\begin{equation}\label{eq:t = 2 p}
\begin{split}
\fL_{2} &\le H^{r^2+o(1)}H^{2(m-r)} \( H^{r-1/2}\)^{n-r}  \\
& = H^{2m +n(r-1/2) - 3r/2+o(1)} . 
\end{split}
\end{equation}

For $t \ge 3$,  the total contribution $\fL_{\ge 3}$ satisfies 
\begin{equation}
\label{eq:t  ge 3 p}
\begin{split}
\fL_{\ge 3}&\le H^{r^2+o(1) } \sum_{t =3}^{r}  H^{t(m-r)} \( H^{r-1}\)^{n-r}  \\
& \le 
H^{r^2+o(1) } H^{r(m-r)} \( H^{r-1}\)^{n-r}    \\
& = H^{rm +n(r-1) - r(r-1) +o(1)} . 
\end{split}
\end{equation}

Substituting the bounds~\eqref{eq:t in [0,1] p},  \eqref{eq:t = 2 p},  and~\eqref{eq:t ge 3 p} in the inequality 
\[
L_{\vf, r}(H,p) \ll L_{\vf, r}^*(H,p) \le \fL_{0,1}  + \fL_{2}  +   \fL_{\ge 3}, 
\]
where $L_{\vf, r}^*(H,p)$ is defined as in Section~\ref{sec:T1 2}, we conclude the proof.

\subsection{Proofs of Theorems~\ref{thm: sing poly matr fij Fp} and~\ref{thm: imm poly matr fij Fp}}

Recall that by Lemma~\ref{lem:detpolyirr}, the polynomial
\[
 D\( \(X_{i,j}\)_{1\le i,j\le n}\) =\det\(f_{i,j}(X_{i,j})\)_{1\le i,j\le n},  
\] 
is absolutely irreducible over $\Fp$ for sufficiently large prime $p$. 
Hence by the  famous result of Lang and Weil~\cite{LaWe}, we have
\begin{equation}
\label{eq:LW}
T_D(p) = p^{n^2-1} + O(p^{n^2-3/2}),
\end{equation}
where we recall that $T_D(p)$ is the number of solutions to the congruence
\[
D\((x_{i,j})_{1\le i,j\le n}\)\equiv 0 \pmod p
\]
with unrestricted variables from $\F_p$.

To apply Lemma~\ref{lem:box}, we need to show that the algebraic variety $\cV=\cZ(D)$, where $\cZ(D)$ denotes the set of zeros of $D$ in $\C^{n^2}$, is not contained in any hyperplane. Let us assume that $\cV\subseteq \cH$, where $\cH=\cZ(L)$ is the hyperplane defined by an affine polynomial $L \in \C[X_{1,1},\ldots,X_{n,n}]$. Since both $D$ (by  Lemma~\ref{lem:detpolyirr}) and $L$ are absolutely irreducible, the ideals generated by them, $I$ and $J$, respectively, are radical ideals (in fact, they are prime). 
 Therefore, the inclusion $\cV\subseteq \cH$, implies, via the (strong) Nullstellensatz, the inclusion of ideals $J\subseteq I$, see for example~\cite[Theorems~6 and~7, Chapter~4]{CoxLitShe}. 
Thus, we have $L \in I$, which means that $D$ is a divisor of $L$ in $\C[X_{1,1},\ldots,X_{n,n}]$, which is a contradiction. Hence, $\cV$ is not contained in any hyperplane.  

Thus, since by Lemma~\ref{lem:detpolyirr}, $D$ is also  irreducible over $\C$, the result follows by an application of Lemma~\ref{lem:box} and~\eqref{eq:LW}, after one verifies that 
\[
 \frac{p^{n^2-3/2}}{p^{n^2}} H^{n^2} = H^{n^2} p^{-3/2} \le   H^{n^2-2}p^{1/2} 
\] 
 for $H \le p$, which give Theorem~\ref{thm: sing poly matr fij Fp}. 
 
The proof of Theorem~\ref{thm: imm poly matr fij Fp} is fully analogous, except using 
  polynomials $f_{i,j}(X_{i,j})\in \Z[X_{i,j}]$, $i,j=1,\ldots,n$ of the same degree, we are now able to use 
 Lemma~\ref{lem:lcminorsirr}  instead of  Lemma~\ref{lem:detpolyirr}. Indeed, we also note that the same argument as above, applied with the nonlinear polynomial 
 \[
\sum_{\sigma \in \cS_n} \chi(\sigma) \prod_{i=1}^n f_{i,\sigma(i)}(X_{i,\sigma(i)}), 
\]
which is irreducible by Lemma~\ref{lem:lcminorsirr}, shows that its zero set in $\C^{n^2}$ is not contained in any hyperplane.

\section{Further improvements and generalisations}
\label{sec:improve}

First we recall that Remark~\ref{rem:Better I_k} outlines a possibility for 
deriving slightly stronger bounds. 

 Next, we observe that for polynomials of  large degree, one can also use the bound
$$
 I_{6}(f,H)     \le H^{3+ o(1)}\(H^{1/3} + H^{2/\sqrt{d} + 1/(d-1)}\)
$$
of Browning~\cite[Theorem~2]{Brow}, see also~\cite{BrHB0}, which is stronger than the bound on $ I_{6}(f,H)$ 
in Lemma~\ref{lem:Small k} for $d \ge 20$. Then, using the  H{\"o}lder inequality one can also 
estimate $ I_{5}(f,H) \le I_{6}(f,H)^{5/6}$.

We also note that in the case when the polynomials $f_{i,j}$ are 
monomials $f_{i,j}(X_{i,j}) = X_{i,j}^d$,  $1\le i\le m$, $1 \le j\le n$,  of  the same degree $d\ge 3$ then 
using~\cite[Corollary~14.7]{Wool} one can for some parameters obtain a stronger version of  Lemma~\ref{lem:Wool}
and thus of Theorem~\ref{thm: rank poly matr fij Z 1}. This corresponds to the scenario of~\cite{BlLi}. 

Lemma~\ref{lem:lcminorsirr}  also allows us to get an asymptotic formula of the type of
 Theorem~\ref{thm: sing poly matr fij Fp} for the number of singular matrices 
 of the form
 \[
 \begin{pmatrix} f_{1,1}\(x_{1,1}\) & \ldots &  f_{1,n}\(x_{1,n}\) \\
 \vdots & \ldots& \vdots\\
 f_{n-1,1}\(x_{n-1,1}\) & \ldots &  f_{n-1,n}\(x_{n-1,n}\) \\
 a_1 & \ldots & a_n
 \end{pmatrix}, 
\]
with integers $x_{i,j} \in [-H,H]$, $1\le i\le n$, $1 \le j\le n-1$, for a fixed 
vector $\va = (a_1, \ldots, a_n) \in \F_p^n$.   This has an interpretation as the number of polynomial vectors containing $\va$ in their span.

Furthermore, using several recent results coming from additive combinatorics, such as
of Bradshaw,  Hanson and   Rudnev~\cite[Theorem~5]{BrHaRu} and of Mudgal~\cite[Theorem~1.6]{Mud}, 
one can obtain analogues of our results for matrices with elements of arbitrary 
but sufficiently quickly growing sequences.

In the case of matrices defined over $\F_p$, we note that if the polynomials $\vf$ in 
Theorem~\ref{thm: rank poly matr fij Fp}
are fixed of degree at most $e$ and defined over $\Z$ then for $H \le c(\vf) p^{1/e}$ with some constant $c(\vf)>0$ depending only on $\vf$, 
one can get results of the same strength as in Theorem~\ref{thm: rank poly matr fij Z 1}.
Indeed, in this case the congruence
\[
\sum_{i=1}^k f(x_i) \equiv \sum_{i=1}^k f(y_i) \pmod p, \qquad  -H  \le x_i, y_i \le H, \ i =1, \ldots, k, 
\]
whose number of solutions is given by $J_{2k}(f,H,p)$,
becomes an equation. Therefore,  we now have $J_{2k}(f,H,p) = I_{2k}(f,H)$ and hence the bounds of Section~\ref{sec: sol box Z}
apply.

 We also note that  by~\cite[Lemma~6]{Go-PSh} one can multiply all coefficients of 
\[
f(X) = a_e X^e + \ldots + a_1X + a_0 \in \Z[X]
\]  
by some 
integer $\lambda \not \equiv 0 \pmod p$ such that their smallest by absolute value residues $b_j \equiv \lambda a_j \pmod p$ satisfy 
\[
b_j \ll p^{1-2j/(e(e+1))}, \qquad j=0, \ldots, e.
\] 
Hence  for $H \le c(e) p^{2/(e(e+1))}$ 
with some constant $c(e)$ depending only on $e$, we have 
$J_{2k}(f,H,p) = I_{2k}(g,H)$, where 
\[
g(X) = b_e X^e + \ldots + b_1X + b_0 \in \Z[X].
\] 
It remains to 
recall that many results obtained via the determinant method, such as~\cite{Pila}, are uniform
with respect to the polynomials involved.

Finally, we mention that in the same fashion as Theorems~\ref{thm: rank poly matr fij Z 1} and~\ref{thm: rank poly matr fij Fp}, one may obtain similar results allowing the polynomials $f_{i,j}(X_{i,j})$ in the matrix  
 $\vf $ to take values from sets $\cA\subseteq \Z$, or $\F_p$, of cardinality $A$, with small sum 
 set $\cA+\cA = \{a + b:~a,b\in \cA\}$. That is,  if $\#(\cA+\cA) \leq KA$ with $K= A^{1-\varepsilon}$, for some $\varepsilon>0$.

 For instance, given a quadratic polynomial $f\in \F_p[X]$, for sets $\cA\subset \F_p$, with $A\leq p^{2/3}$ and $\#(\cA+\cA)\ll A$, it follows from~\cite[Lemma~2.10]{ShkShp} that the number of solutions to the equation
 \begin{align*}
f(u) + f(v) +f(w) & = f(x) + f(y) + f(z), \\(u,v,w,x&,y,z) \in \cA^6,
\end{align*}
is $O\(A^{5-1/2}\)$. See also~\cite[Corollaries~2.10 and~2.11]{ShkShp}. 
We further point out that the relevant bounds of~\cite{ShkShp} also hold for subsets of $\Z$, due to the generality of their underlying result from~\cite{Rud}. Perhaps the approach of~\cite{KMS} can also be adapted to equations  of the above type with 6 or more variables.

We have already mentioned possible generalisations of our results from matrices with polynomial entries to 
matrices with elements coming from sequences with some additive properties. Another possible generalisation
is for sequences with  some multiplicative properties. First we recall that Alon and Solymosi~\cite[Theorem~1]{AlSol} 
have shown that matrices with entries from finitely generated subgroups of $\C^*$ have a rank growing with their 
dimension $n$. Using bounds on the number of solutions to so-called {\it $S$-unit equations\/}, see, for example,~\cite[Theorem~6.2]{AmVia} and our argument, 
one can obtain various counting versions of~\cite[Theorem~1]{AlSol}. 
 
\section*{Acknowledgements}

The authors are very grateful to 
Valentin Blomer and Junxian Li for useful comments on the preliminary  versions
of this paper and to Akshat Mudgal for  several  important suggestions and references, which 
helped to improve  some of the results.

During the preparation of this work, the authors were   supported in part by the  
Australian Research Council Grants  DP200100355 and DP230100530.


\begin{thebibliography}{www}
 
 \bibitem{Abh} S. S. Abhyankar, `Combinatoire des tableaux de {Y}oung,
vari\'et\'es d\'eterminantielles et calcul de fonctions de
{H}ilbert', {\it Rend. Sem. Mat. Univ. Politec. Torino\/},
\textbf{42} (1984), 65--88.

 \bibitem{AhmShp} O. Ahmadi and I. E. Shparlinski, 
`Distribution of matrices with restricted entries over finite fields',
 {\it Indag. Mathem.\/} \textbf{18} (2000), 327--337.
 
 \bibitem{AlSol} N. Alon and J. Solymosi, `Rank of matrices with entries from a multiplicative group', 
  {\it Intern. Math. Res.  Notices\/},  \textbf{2023} (2003), 12383--12399. 

 \bibitem{AmVia}  F. Amoroso and E. Viada, 
 `Small points on subvarieties of a torus', 
  {\it Duke Math. J.\/}  \textbf{150} (2009), 407--442.
 
 
 

 \bibitem{BlLi} V. Blomer and  J. Li, `Correlations of values of random diagonal forms'
 {\it Intern. Math. Res.  Notices\/} (to appear).


\bibitem{BrHaRu}
P. J. Bradshaw, B. Hanson and M. Rudnev,
`Higher convexity and iterated second moment estimates', 
{\it Electron. J. Comb.\/}, {\bf 29} (2022), P.3.6.



\bibitem{Bourg} J. Bourgain,  
`On the Vinogradov mean value', 
 {\it Proc. Steklov Math. Inst.\/}, {\bf 296} (2017),  30--40.
 
  


\bibitem{Brow}
T. D. Browning, 
`Equal sums of like polynomials', 
{\it Bull. London Math. Soc.\/}, {\bf 37} (2005), 801--808.

\bibitem{BrHB0}
T. D. Browning and R. Heath-Brown, 
`Equal sums of powers',  
{\it Invent. Math.\/}, {\bf  157} (2004), 553--573.

\bibitem{BrHB1}
T. D. Browning and R. Heath-Brown, 
`The density of rational points on non-singular hypersurfaces, I', 
{\it Bull. London Math. Soc.\/}, {\bf 38} (2006), 401--410.

\bibitem{BrHB2}
T. D. Browning and R. Heath-Brown, 
`The density of rational points on non-singular hypersurfaces, II', 
{\it Proc. London Math. Soc.\/}, {\bf 93} (2006), 273--303.


  \bibitem{BruVet} W. Bruns and U. Vetter, 
{\it Determinantal rings\/}, Lecture Notes in Math,  Vol.~1327. Springer-Verlag, Berlin Heidelberg 1988.

\bibitem{Chang} M.-C. Chang, `Sparsity of the intersection of polynomial
images of an interval', {\it Acta Arith.\/}, {\bf 165} (2014), 243--249.  

  \bibitem{CoxLitShe} D. A. Cox, J. Little and D. O'Shea, 
{\it Ideals, varieties, and algorithms: An introduction to computational algebraic geometry and commutative algebra\/}, Springer International Publishing Switzerland 2015.

 
 \bibitem{E-BLS}
D. El-Baz, M. Lee and A.  Str{\"o}mbergsson, 
`Effective equidistribution of primitive rational points on expanding horospheres', 
{\it Preprint\/}, 2022 (available from \url{http://arxiv.org/abs/2212.07408}).

\bibitem{Fouv00} E. Fouvry, 
`Consequences of a result of N.~Katz and G.~Laumon concerning trigonometric sums',
 {\it Isr. J. Math..\/}, {\bf 120} (2000), 81--96.
 

%

\bibitem{Go-PSh}  D. G\'omez-P\'erez and I. E. Shparlinski,
`Subgroups generated by rational functions 
in finite fields',  {\it Monat. Math.\/}, {\bf 176} 
(2015), 241--253.

\bibitem{Hua} 
L.-K. Hua, `On Waring's problem', {\it  Q. J. Math. Oxford.\/}, {\bf  9} (1938), 199--202.    
 

\bibitem{Katz}  
Y. R. Katznelson, `Integral matrices of fixed rank', {\it Proc. Amer. Math. Soc.}, 
{\bf 120} (1994), 667--675.  

 \bibitem{KMS}
  B. Kerr, A. Mohammadi  and I. E. Shparlinski, 
`Additive energy of polynomial images', 
{\it Preprint\/}, 2023, (available from \url{https://arxiv.org/abs/2306.10677}).


\bibitem{LaWe}  S. Lang and A. Weil, `Number of points of varieties in finite fields',
{\it Amer. J.  Math.\/},  \textbf{76} (1954), 819--827. 


\bibitem{Li}  W.-C. W. Li,  \textit{Number theory with applications},
World Scientific,  Singapore, 1996.

 
\bibitem{LN} R. Lidl and H. Niederreiter,  \textit{Finite fields}, 
Cambridge Univ. Press, Cambridge, 1997.





\bibitem{Mud} A. Mudgal, `Energy estimates in sum-product and convexity 
problems', 
{\it Preprint\/}, 2021 (available from  \url{https://arxiv.org/abs/2109.04932}).


\bibitem{Pila} J. Pila,
`Density of integral and rational points on varieties',
{\it Ast\'{e}rique\/}, {\bf 228} (1995), 183--187.


\bibitem{Rud} M. Rudnev,
`On the number of incidences between points and planes in three dimensions', 
{\it Combinatorica},{\bf 38}  (2018), 219--238.

\bibitem{Salb} P. Salberger,  `Counting rational points on projective
varieties', {\it Proc. London Math. Soc.\/}, {\bf 126}  (2023), 1092--1133.

\bibitem{Sch} A. Schinzel, {\it Polynomials with special regard to reducibility\/}, Encycl. Math. Appl., vol. 7, Cambridge Univ. Press, Cambridge, 2000.

\bibitem{Schmidt} W. M. Schmidt,
{\it Equations over finite fields: An elementary approach\/}, Springer-Verlag,
Berlin-Heidelberg-New York, 1976.

\bibitem{Shk} I. Shkredov,
`Some new results on higher energies',
{\it Trudy Mosk. Mat. Obs.\/}, {\bf 74} (2013), 35--73,  (in
Russian). 

\bibitem{ShkShp} I. D. Shkredov and I. E. Shparlinski, `Double character sums with intervals and arbitrary sets',
 {\it Proc. Steklov Inst. Math.\/}, {\bf 303} (2018), 239--258.


\bibitem{Tve} H. Tverberg, `A remark on Ehrenfeucht’s criterion for irreducibility of polynomials', {\it Prace Matematyczne Warszawa}, {\bf  8} (1964), 117--118.

\bibitem{Wool} T.~D.~Wooley, 
`Nested efficient congruencing and relatives of Vinogradov's mean value theorem',
 {\it Proc. London Math. Soc.\/} \textbf{118} (2019), 942--1016.

\end{thebibliography}
 \end{document}